\newtheorem{theorem}{Theorem}[section]
\newtheorem{lemma}[theorem]{Lemma}
\newtheorem{proposition}[theorem]{Proposition}
\newtheorem{remark}[theorem]{Remark}
\newcommand\F{\mathbf{F}}
\newcommand\y{\mathbf{y}}
\newcommand\summ{\textstyle\sum\limits}
\newcommand\R{\mathbb{R}}
\newcommand\N{\mathbb{N}}
\newcommand{\inner}[2]{\langle #1, #2\rangle}
\newcommand\norm[1]{\|#1\|}
\newcommand\Norm[1]{\left\|#1\right\|}
\newcommand\set[1]{\{#1\}}
\newcommand\Set[1]{\left\{#1\right\}}
\begin{document}

\title{On projective Landweber-Kaczmarz methods for solving systems
of nonlinear ill-posed equations}
\setcounter{footnote}{1}

\author{
A.~Leit\~ao$^\dag$
\and
B.F.~Svaiter$^\ddag$
}

\date{\normalsize \mbox{}\\
$\dag$  Department of Mathematics, Federal University of St.\,Catarina, P.O.\,Box 476,
        88040-900 Florian\'opolis, Brazil.
        \href{mailto:acgleitao@gmail.com}{\tt acgleitao@gmail.com} \\
$\ddag$ IMPA, Estrada Dona Castorina 110, 22460-320 Rio de Janeiro, Brazil.
        \href{mailto:benar@impa.br}{\tt benar@impa.br} \\[4ex]
\normalsize \today}

\maketitle

\begin{abstract}
In this article we combine the {\em projective Landweber method},
recently proposed by the authors, with {\em Kaczmarz's method}
for solving systems of non-linear ill-posed equations.  
The underlying assumption used in this work is the tangential cone
condition.
We show that the proposed iteration is a convergent regularization method.
Numerical tests are presented for a non-linear inverse problem related to the
Dirichlet-to-Neumann map, indicating a superior performance of the proposed
method when compared with other well established iterations.
Our preliminary investigation indicates that the resulting iteration is
a promising alternative for computing stable solutions of {\em large scale
systems of nonlinear ill-posed equations}.
\end{abstract}

\medskip\noindent {\small {\bf Keywords.} Ill-posed problems; Nonlinear equations;
Landweber method, Kaczmarz method, Projective method.}
\medskip

\medskip\noindent {\small {\bf AMS Classification:} 65J20, 47J06.}

\section{Introduction} \label{sec:intro}

The classical Kaczmarz iteration consisting of cyclic orthogonal projections
was devised in 1937 by the Polish mathematician Stefan Kaczmarz for solving
(large scale) systems of linear equations \cite{Kac37}.
Since then, this method was successfully used for solving ill-posed linear
systems related to several relevant applications, e.g. X-ray Tomography%
\footnote{In the Tomography community, the Kaczmarz method is called ``Algebraic
Reconstruction Technique'' (ART).}
\cite{Her75, Her80, Nat77, Nat86, Nat97, NW01} and Signal Processing
\cite{Byr15, SMSAK13, VKG14}.

In this manuscript we couple the {\em projective Landweber} (PLW) method
\cite{LS15} with the {\em Kaczmarz} method. The resulting iteration,
designated here by {\em projective Landweber-Kaczmarz} (PLWK) method, is a
new cyclic type method for obtaining stable approximate solutions for
systems of nonlinear ill-posed equations. {\color{black} The idea of
  projecting on a separating halfspace, which depends on the noisy level,
  was already used for the case of linear operators in \cite{Nat86, SSL08}.
}

The \textit{inverse problem} we are interested in consists of determining an unknown
quantity $x \in X$ from the set of data $(y_0, \dots, y_{N-1}) \in Y^N$, where
$X$, $Y$ are Hilbert spaces and $N \geq 1$ (the case $y_i \in Y_i$ with possibly
different spaces $Y_0,\ldots,Y_{N-1}$ can be treated analogously). In practical
situations, the exact data are not known. Instead, only approximate measured data
$y_i^\delta \in Y$ are available such that
\begin{equation}\label{eq:noisy-i}
    \norm{ y_i^\delta - y_i } \ \le \ \delta_i \, , \quad i = 0, \dots, N-1 \, ,
\end{equation}
with $\delta_i > 0$ (noise level). We use the notation $\delta :=
(\delta_0, \dots, \delta_{N-1})$.

The finite set of data above is obtained by indirect measurements of the
parameter $x$, this process being described by the model $F_{i}(x) = y_i$,
for $i = 0, \dots, N-1$. Here $F_i: D_i \subset X \to Y$ are ill-posed
operators \cite{Gr07} and $D_i$ are the corresponding domains of definition.
Summarizing, the abstract functional analytical formulation of the inverse
problems under consideration consists in finding $x \in X$ such that
\begin{equation} \label{eq:inv-probl}
    F_{i}(x) \ = \ y_i^\delta \, , \quad i = 0, \dots, N-1 \, .
\end{equation}

Standard methods for the solution of system \eqref{eq:inv-probl} are
based in the use of \textit{Iterative type regularization}~\cite{BK04,
  EHN96, HNS95, KNS08, Lan51} or \textit{Tikhonov type
  regularization}~\cite{EHN96, Mor93, SV89, Tik63b, TA77, Sch93a} after
rewriting (\ref{eq:inv-probl}) as a single equation
\begin{align} \label{eq:single-op}
 \F(x) = \y^\delta , \ \ \ \ \mbox{ with } \ \ \ \
 \F : = ( F_0, \dots, F_{N-1} ): \textstyle\bigcap\limits_{i=0}^{N-1} D_i \to Y^N , \ \ \
 \y^\delta := \big( y_0^\delta, \dots, y_{N-1}^\delta \big) .
\end{align}
A classical and general condition commonly used in the convergence analysis
of these methods is the \emph{Tangent Cone Condition} (TCC)~\cite{HNS95}.
If one resorts to the functional analytical formulation \eqref{eq:single-op},
one has to face the numerical challenges of solving a large scale system of ill-posed
equations \cite{CFKS13}. When applied to \eqref{eq:single-op}, the above mentioned
solution methods become inefficient if $N$ is large or the evaluations of
$F_i(x)$ and $F'_i(x)^*$ are expensive.

An alternative technique for solving system \eqref{eq:inv-probl} in a stable way is to
use {\em Kaczmarz (cyclic) type regularization methods}. This technique was introduced in
\cite{HLS07,HKLS07},  \cite{CHLS08},  \cite{HLR09},  \cite{BKL10},  \cite{MRL14} 
and  \cite{BK06}
for
the Landweber iteration,
the Steepest-Descent iteration,
the Expectation-Maximization iteration,
the Levenberg-Marquardt iteration,
the REGINN-Landweber iteration,
and the Iteratively Regularized Gauss-Newton iteration respectively.

Our aim is to combine the newly proposed Projective Landweber Method~\cite{LS15}
with the Kaczmarz method.
The Projective Landweber Method (PLW) is an iterative type method for solving
\eqref{eq:inv-probl} when $N=1$ and $F_0$ satisfies the TCC.
In each iteration $k$, a half space separating $x_k$ from the solution
set is defined and $x_{k+1}$ is a relaxed projection of $x_k$ onto this set.
The resulting iterative method for solving \ $F_0(x) \, = \, y_0^\delta$ \ can be
written in the form
\begin{equation} \label{eq:plw-iter}
  x_{k+1}^\delta \ := \ x_k^\delta \ - \
  \theta_k \, \lambda_k \,
  F_0'(x_k^\delta)^* \big( F_0(x_k^\delta) - y_0^\delta \big) \, ,
\end{equation}
where $\theta_k \in (0,2)$ is a relaxation parameter and $\lambda_k \geq 0$ gives
the exact projection of $x_k^\delta$ onto $H_{0,x_k^\delta}$ (see \cite[Eq.\,(8)]{LS15}).
Observe that this iteration is a Landweber iteration with a stepsize control.
In the next paragraph we present a combination of the PLW method with Kaczmarz
method, for solving \eqref{eq:inv-probl} when $N>1$.

\subsubsection*{The Projective Landweber Kaczmarz (PLWK) method:}

The PLWK method for the solution of \eqref{eq:inv-probl} proposed in this article
consists in coupling the PLW method \eqref{eq:plw-iter} with the Kaczmarz (cyclic)
strategy and incorporating a bang-bang parameter, namely
\begin{equation} \label{eq:plwk-iter}
  x_{k+1}^\delta \ := \ x_k^\delta \ - \
  \theta_k \, \lambda_k \, \omega_k \,
  F_{[k]}'(x_k^\delta)^* \big( F_{[k]}(x_k^\delta) - y_{[k]}^\delta \big) \, .
\end{equation}
Here the parameters $\theta_k$, $\lambda_k$ have the same meaning as in \eqref{eq:plw-iter}
(see \eqref{def:tau-p-lbd} for the precise definition of $\lambda_k$) while
\begin{equation} \label{def:wk-def}
\omega_k \ = \ \omega_k(\delta_{[k]}, y_{[k]}^\delta) \ := \
\begin{cases}
      1  & \norm{F_{[k]}(x_{k}^\delta) - y_{[k]}^\delta} > \tau \delta_{[k]} \\
      0  & \text{otherwise}
\end{cases} \, ,
\end{equation}
where $\tau > 1$ is an appropriate chosen positive constant \eqref{def:tau-p-lbd}
and  $[k] := (k \mod N) \in \set{0, \dots, N-1}$.
{\color{black}
We also consider PLKWr a ``randomized'' version of the method (in the spirit of~\cite{BB97})
where $[k]$ is randomly chosen in $\set{0,\ldots,N-1}$.
}

As usual in Kaczmarz type algorithms, a group of $N$ subsequent steps (starting at
some integer multiple of $N$) is called a {\bf cycle}. In the case of noisy data,
the iteration terminates if all $\omega_k$ become zero within a cycle, i.e., if \
$\norm{F_{i}(x_{k+i}^\delta) - y_{i}^\delta} \leq \tau \delta_i$, $i\in\set{0,\dots,N-1}$,
for some integer multiple $k$ of $N$.

The PLWK iteration scheme in \eqref{eq:plwk-iter}, \eqref{def:wk-def} exhibits the following
characteristics:

\noindent $\bullet$ For noise free data, $\omega_k = 1$ for all $k$ and each cycle consist
of exactly $N$ steps of type \eqref{eq:plw-iter}. Thus, the numerical effort required for
the computation of one cycle of PLWK rivals the effort needed to compute one step
of PLW (or LW)
for \eqref{eq:single-op}. \\
$\bullet$ In the {\em realistic noisy data case}, the bang-bang relaxation parameter
$\omega_k$ will vanish for some $k$ (especially in the last iterations).
Consequently, the computational evaluation of $F'_{[k]}(x_k^\delta)^*$
might be avoided,
making the PLWK method a fast alternative to conventional regularization techniques for
the single equation approach \eqref{eq:single-op}.

\noindent $\bullet$ The {\em convergence of the residuals in the maximum norm} better
exploits the estimates for the noisy data (\ref{eq:noisy-i}) than the standard
regularization methods for \eqref{eq:single-op}, where only \
$N^{-1} \sum_{i=0}^{N-1} \norm{F_i(x_k^\delta) - y_{i}^\delta}^2$ (the {\em squared
average of the residuals}) falls below a certain threshold.
Moreover, the parameter $\omega_k$ in \eqref{def:wk-def} effects that the iterates
$x_k^\delta$ in \eqref{eq:plwk-iter} become stationary in such a way that
\textit{each residual} \ $\norm{F_i(x_k^\delta) - y_{i}^\delta}$ \ in
\eqref{eq:inv-probl} falls below some threshold. This makes (\ref{eq:plwk-iter})
a convergent regularization method in the sense of \cite{EHN96}.

\subsubsection*{Outline of the article:}

In Section~\ref{sec:nbr} we state the main assumptions and derive some
preliminary results and  estimates.
In Section~\ref{sec:plwk} we define the convex sets $H_{i,x}$ related to the operator
equations in \eqref{eq:inv-probl} and prove a special separation property of these
sets. The PLWK iteration is described in detail and a stopping criteria is defined
(in the noisy data case), which is proved to be finite.
Moreover, the first convergence analysis results are obtained, namely: monotonicity
of the iteration error (Proposition~\ref{pr:monot}) and square summability of iteration
steps \eqref{eq:plw-noise2}.
In Section~\ref{sec:conv-anal} weak convergence of the PLWK method for exact data
is proven. Moreover, stability and semi-convergence results are presented.
Section~\ref{sec:plwkr} is devoted to the investigation of a randomized version of
the PLWK method, here denoted by PLWKr method.
In Section~\ref{sec:numerics} we present numerical experiments for a nonlinear
parameter identification problem related to the Dirichlet-to-Neumann map
\cite{LS15, BKL10, HKLS07, LMZ06a, Le06, BELM04}, while Section~\ref{sec:conclusion}
is devoted to final remarks and conclusions.
In the Appendix a strongly convergent version of the PLWK method for exact data is analyzed.

\section{Main assumptions and auxiliary results}
\label{sec:nbr}

In this section we state our main assumptions and discuss some of their
consequences, which are relevant for the forthcoming analysis. In what
follows, we adopt the simplified notation
\begin{equation}
  \label{eq:fdelta}
  F_{i,\delta}(x) \ := \ F_i(x) - y_i^\delta
  \quad\quad {\rm and }  \quad\quad
  F_{i,0}(x) \ := \ F_i(x) - y_i \, .
\end{equation}

Throughout this work we make the following assumptions, which are
standard in the recent analysis of iterative regularization methods
(cf., e.g., \cite{EHN96, KNS08, Sch93a}):

\begin{description}
\item[A1] Each $F_i$ is a continuous operator defined on $D(F_i) \subset X$,
and the domain $D := \bigcap\nolimits_i \, D(F_i)$ has nonempty interior.
Moreover, the initial iterate $x_0 \in D$ and  
there exist constants $C$, $\rho > 0$ such that
$F_i'$, the Gateaux derivative of $F_i$, is defined on $B_\rho(x_0) \subset D$
and satisfies
\begin{equation} \label{eq:a-dfb}
    \| F_i'(x) \| \ \le \ C \, , \ \ \ x \in B_\rho(x_0) \, , \ \ \
    i = 0, \dots, N-1 \, ;
\end{equation}
\item[A2] The \emph{local tangential cone condition} (TCC) \cite{HNS95, KNS08, EHN96}
\begin{equation} \label{eq:tcc}
  \| F_i(\bar{x}) - F_i(x) -  F_i'(x)( \bar{x} - x ) \|_Y \ \leq \
  \eta \norm{ F_i(\bar{x}) - F_i(x) }_Y \, , \quad \forall \ x, \bar{x} \in B_{\rho}(x_0)
\end{equation}
holds for some $\eta < 1$ and $i = 0, \dots, N-1$;
\item[A3] There exists an element $x^\star \in B_{\rho/2}(x_0)$ such that
$F_i(x^\star) = y_i$, for $i = 0, \dots, N-1$, where $y_i \in Rg(F_i)$ are the
exact data satisfying
\eqref{eq:noisy-i};
\item[A4] All operators $F_i$ are continuously Fr\'echet differentiable on
$B_\rho(x_0)$;
\end{description}

\noindent (in \textbf{A2} -- \textbf{A4} the point $x_0 \in X$ and the constant
$\rho > 0$ are as in  \textbf{A1}).

\medskip

Observe that in the TCC we require $\eta < 1$ (see \cite{LS15}) whereas
in classical convergence analysis for the nonlinear Landweber under this
condition, $\eta<1/2$ is required instead (see \cite{EHN96, KNS08}).

The next proposition contains a collection of auxiliary results and estimates
that follow directly from \textbf{A1} -- \textbf{A3}. For a complete proof we
refer the reader to \cite[Section~2]{LS15}.

\begin{proposition}
  \label{pr:prelim}
  If \textbf{A1} -- \textbf{A3} hold, then for any $x$, $\bar x \in B_\rho(x_0)$,
  and \ $i = 0, \dots, N-1$ we have
  \begin{enumerate}
  \item\label{it:1}
    $(1-\eta)\norm{F_i(x) - F_i(\bar x)} \ \leq \ \norm{F_i'(x) (x - \bar x)}
    \ \leq \ (1+\eta) \norm{F_i(x) - F_i(\bar x)}$.
  \item\label{it:2}
    $\inner{F_i'(x)^* F_{i,0}(x)}{x - \bar x} \ \leq \
    (1+\eta) (\norm{F_{i,0}(x)}^2 + \norm{F_{i,0}(x)} \norm{F_{i,0}(\bar x)})$.
  \item\label{it:3}
    $\inner{F_i'(x)^* F_{i,0}(x)}{x - \bar x} \ \geq \
    (1-\eta) \norm{F_{i,0}(x)}^2 - (1+\eta) \norm{F_{i,0}(x)} \norm{F_{i,0}(\bar x)}$.
  \item\label{it:4}    
  If, additionally, \ $F_{i,0}(x)\neq 0$ \ then
    $$
    (1-\eta) \norm{F_{i,0}(x)} - (1+\eta) \norm{F_{i,0}(\bar x)} \
    \leq \ \norm{F_i'(x)^* (x - \bar x)} \
    \leq \ (1+\eta) (\norm{F_{i,0}(x)} + \norm{F_{i,0}(\bar x)}) .
    $$
  \item\label{it:5}
  $F_{i,0}(x) = 0$ \ if and only if \ $F_i'(x)^* F_{i,0}(x) = 0$.
  \item\label{it:6}
  For any \ $(x_k) \in B_\rho(x_0)$ \ converging to \ $\bar x$, the following statements
  are equivalent: \bigskip
  
  \centerline{a) \ $\lim\limits_{k\to\infty} \norm{F_i'(x_k)^* F_{i,0}(x_k)}\,=\,0$;
      \ \quad b) \ $\lim\limits_{k\to\infty} \norm{F_{i,0}(x_k)} \, = \, 0$;
      \ \quad c) $F_{i,0}(\bar x) \, = \, 0$.}

  \item\label{it:7}
  If \ $x^\star \in B_\rho(x_0) \cap F_{i,0}^{-1}(y)$ \ then \
  $\norm{y_i - y_i^\delta - F_{i,\delta}(x) - F_i'(x)(x^\star-x)} \ \leq \
  \eta \, \norm{y_i - y_i^\delta - F_{i,\delta}(x)}$.
  \end{enumerate}
\end{proposition}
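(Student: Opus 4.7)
The plan is to prove all seven items as direct consequences of the TCC combined with elementary Hilbert-space inequalities (Cauchy–Schwarz, triangle inequality, and the identity $\inner{F_i'(x)^*u}{v}=\inner{u}{F_i'(x)v}$). Since the paper says the complete proof is in \cite{LS15}, the goal is really to sketch the unifying computation rather than grind through each item.

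The central trick is to write $F_i'(x)(x-\bar x)$ as $[F_i(x)-F_i(\bar x)]+R$, where $R:=F_i'(x)(x-\bar x)-[F_i(x)-F_i(\bar x)]$ is the TCC residual, satisfying $\norm{R}\le \eta\norm{F_i(x)-F_i(\bar x)}$. Using $F_i(x)-F_i(\bar x)=F_{i,0}(x)-F_{i,0}(\bar x)$ and the triangle inequality gives $\norm{F_i(x)-F_i(\bar x)}\le\norm{F_{i,0}(x)}+\norm{F_{i,0}(\bar x)}$. Item~\ref{it:1} follows at once from $\norm{F_i'(x)(x-\bar x)}\le\norm{F_i(x)-F_i(\bar x)}+\norm{R}$ and its reverse triangle variant. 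For items~\ref{it:2} and~\ref{it:3} I would expand
\[
\inner{F_i'(x)^*F_{i,0}(x)}{x-\bar x}
=\inner{F_{i,0}(x)}{F_{i,0}(x)-F_{i,0}(\bar x)}+\inner{F_{i,0}(x)}{R},
\]
then bound the first term by $\norm{F_{i,0}(x)}^2\pm\norm{F_{i,0}(x)}\norm{F_{i,0}(\bar x)}$ via Cauchy–Schwarz and the second by $\eta\norm{F_{i,0}(x)}(\norm{F_{i,0}(x)}+\norm{F_{i,0}(\bar x)})$ via TCC; collecting the $(1\pm\eta)$ coefficients yields both inequalities.

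Item~\ref{it:4} is obtained from the same decomposition applied to $\norm{F_i'(x)^*(x-\bar x)}$: the upper bound uses $\norm{F_i'(x)^*(x-\bar x)}\le\norm{F_i'(x)(x-\bar x)}$... actually one uses the variational characterization $\norm{F_i'(x)^*(x-\bar x)}=\sup_{\norm{u}=1}\inner{u}{F_i'(x)(x-\bar x)}$; taking $u=F_{i,0}(x)/\norm{F_{i,0}(x)}$ provides the lower bound via item~\ref{it:3} divided by $\norm{F_{i,0}(x)}$, while the upper bound follows from item~\ref{it:2} together with $\sup_u\inner{u}{F_i'(x)(x-\bar x)}\le(1+\eta)\norm{F_i(x)-F_i(\bar x)}$. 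Item~\ref{it:5} is immediate: ``$\Rightarrow$'' is trivial; for ``$\Leftarrow$'' apply item~\ref{it:3} with $\bar x=x^\star$ (which exists by \textbf{A3} and satisfies $F_{i,0}(x^\star)=0$) to get $0\ge(1-\eta)\norm{F_{i,0}(x)}^2$. Item~\ref{it:6} chains three equivalences: (a)$\Rightarrow$(b) via item~\ref{it:4} with $\bar x=x^\star$ (giving $\norm{F_{i,0}(x_k)}\le\frac{1}{1-\eta}\norm{F_i'(x_k)^*(x_k-x^\star)}$, after noting $F_{i,0}(x^\star)=0$ and using the boundedness of $x_k-x^\star$ with \textbf{A1} on the adjoint); (b)$\Rightarrow$(c) by continuity of $F_i$; (c)$\Rightarrow$(a) by $\norm{F_i'(x_k)^*F_{i,0}(x_k)}\le C\norm{F_{i,0}(x_k)}$ and continuity.

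Finally, item~\ref{it:7} is essentially a rewriting of the TCC. Using $F_i(x^\star)=y_i$ one checks
\[
y_i-y_i^\delta-F_{i,\delta}(x)-F_i'(x)(x^\star-x)=F_i(x^\star)-F_i(x)-F_i'(x)(x^\star-x),
\]
and $y_i-y_i^\delta-F_{i,\delta}(x)=y_i-F_i(x)=F_i(x^\star)-F_i(x)$, so \textbf{A2} applied with $\bar x=x^\star$ gives the claim directly. The only mildly delicate step in the whole proposition is item~\ref{it:6}, where one must be careful that the constants from items~\ref{it:3}--\ref{it:4} are uniform in $k$ (guaranteed by \textbf{A1} on $B_\rho(x_0)$); everything else is a mechanical application of the decomposition $F_i'(x)(x-\bar x)=[F_i(x)-F_i(\bar x)]+R$ with $\norm{R}\le\eta\norm{F_i(x)-F_i(\bar x)}$.
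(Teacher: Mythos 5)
Your proposal is correct and follows exactly the route the paper intends: the paper gives no in-text proof (it defers to \cite[Section~2]{LS15}), and your unifying decomposition $F_i'(x)(x-\bar x)=[F_i(x)-F_i(\bar x)]+R$ with $\norm{R}\le\eta\norm{F_i(x)-F_i(\bar x)}$, combined with Cauchy--Schwarz, the bound $\norm{F_i'}\le C$ from \textbf{A1}, and the choice $\bar x=x^\star$ from \textbf{A3} for items~\ref{it:5} and~\ref{it:6}, is precisely the standard argument behind all seven items. The only wrinkle is item~\ref{it:4}, whose printed expression $F_i'(x)^*(x-\bar x)$ is ill-typed (the adjoint acts on $Y$, while $x-\bar x\in X$); your reading via $\sup_{\norm{u}=1}\inner{u}{F_i'(x)(x-\bar x)}$ with $u=F_{i,0}(x)/\norm{F_{i,0}(x)}$ recovers the stated bounds (essentially items~\ref{it:2}--\ref{it:3} divided by $\norm{F_{i,0}(x)}$), so this is a defect of the statement rather than a gap in your proof.
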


We conclude this section proving that, under the TCC, the graph of each operator
$F_i$ is weak$\times$strong sequentially closed.

\begin{proposition}
  \label{pr:tcc-cl-gr}
Let \textbf{A1} -- \textbf{A2} be satisfied and $i \in \set{0,\dots,N-1}$.
If $(x_k)$ in $B_\rho(x_0)$ converges weakly to some $\bar x$ in $B_\rho(x_0)$
and $(F_i(x_k))$ converges strongly  to $z \in Y$, then $F_i(\bar{x}) = z$.
\end{proposition}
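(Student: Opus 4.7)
The plan is to apply the tangential cone condition (A2) at the weak limit $x = \bar x$, so that the linearization is performed by the \emph{fixed} bounded linear operator $F_i'(\bar x)$ (well-defined by A1). Writing the TCC with $x = \bar x$ and letting $x_k$ play the role of the second argument gives, for every $k$,
\begin{equation*}
  \| v_k \| \ \leq \ \eta \, \| F_i(x_k) - F_i(\bar x) \| , \qquad
  v_k \ := \ F_i(x_k) - F_i(\bar x) - F_i'(\bar x)(x_k - \bar x) .
\end{equation*}

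Next I would pass to the limit on both sides of this inequality. On the right, strong convergence $F_i(x_k) \to z$ yields $\| F_i(x_k) - F_i(\bar x) \| \to \| z - F_i(\bar x) \|$. On the left, since $F_i'(\bar x)$ is a single bounded linear operator, weak convergence $x_k - \bar x \rightharpoonup 0$ implies $F_i'(\bar x)(x_k - \bar x) \rightharpoonup 0$ in $Y$; combined with the strong (hence weak) convergence of $F_i(x_k) - F_i(\bar x)$ to $z - F_i(\bar x)$, this gives $v_k \rightharpoonup z - F_i(\bar x)$ weakly. Weak lower semicontinuity of the norm then yields
\begin{equation*}
  \| z - F_i(\bar x) \| \ \leq \ \liminf_{k\to\infty} \| v_k \| \ \leq \ \eta \, \| z - F_i(\bar x) \| ,
\end{equation*}
and since $\eta < 1$ this forces $F_i(\bar x) = z$.

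The only point requiring care—and essentially the only idea in the argument—is to linearize the TCC at $\bar x$ rather than at $x_k$: only then is the derivative a \emph{fixed} operator that sends weak-null sequences to weak-null sequences. If one were instead to use $F_i'(x_k)(\bar x - x_k)$, then neither weak nor strong convergence to $0$ would be guaranteed without extra compactness hypotheses, and the weak lower semicontinuity step would fail. Note also that no additional regularity is needed beyond A1--A2; in particular, the Fr\'echet differentiability assumption A4 is not required.
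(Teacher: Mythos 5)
Your proof is correct, and it shares the paper's one essential idea: apply the TCC with the linearization point at the \emph{fixed} weak limit $\bar x$, so that $F_i'(\bar x)(x_k-\bar x)\rightharpoonup 0$ follows from $x_k\rightharpoonup\bar x$ and the boundedness of $F_i'(\bar x)$ guaranteed by \textbf{A1}. Where you diverge is in the finishing mechanism. The paper squares the TCC inequality, expands $\|F_i(x_k)-F_i(\bar x)-F_i'(\bar x)(x_k-\bar x)\|^2$, discards the nonnegative term $\|F_i'(\bar x)(x_k-\bar x)\|^2$, and arrives at
$(1-\eta^2)\,\|F_i(x_k)-F_i(\bar x)\|^2\le 2\,\inner{F_i(x_k)-F_i(\bar x)}{F_i'(\bar x)(x_k-\bar x)}$; the cross term is then split as $(F_i(x_k)-z)+(z-F_i(\bar x))$, with the first piece controlled by strong convergence and Cauchy--Schwarz and the second by passing to the adjoint and using weak convergence. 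You instead keep the TCC unsquared, observe that $v_k\rightharpoonup z-F_i(\bar x)$, and invoke weak lower semicontinuity of the norm to get $\|z-F_i(\bar x)\|\le\eta\,\|z-F_i(\bar x)\|$. Your route is shorter and avoids the algebraic expansion; the paper's route has the minor advantage of producing an explicit quantitative bound showing directly that $F_i(x_k)\to F_i(\bar x)$ strongly, from which $F_i(\bar x)=z$ follows. Your closing remarks—that linearizing at $x_k$ instead would break the argument, and that \textbf{A4} is not needed—are both accurate and consistent with the paper.
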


\begin{proof}
It follows from \textbf{A2} that 
\begin{eqnarray*}
\eta^2 \norm{ F_i(x_k) \!\!\! & - & \!\!\! F_i(\bar{x}) }^2
\ \geq \ \| F_i(x_k) - F_i(\bar{x}) -  F_i'(\bar{x})( x_k - \bar{x} ) \|^2  \\
&  = & \| F_i(x_k) - F_i(\bar{x}) \|^2 + \| F_i'(\bar{x})(x_k - \bar{x}) \|^2
       - 2 \, \inner{F_i(x_k) - F_i(\bar{x})}{F_i'(\bar{x})(x_k - \bar{x})}  \\
&\geq& \| F_i(x_k) - F_i(\bar{x}) \|^2
       - 2 \, \inner{F_i(x_k) - F_i(\bar{x})}{F_i'(\bar{x})(x_k - \bar{x})} \, .
\end{eqnarray*}
Consequently,
\begin{eqnarray}
(1-\eta^2) \norm{ F_i(x_k) \!\!\! & - & \!\!\! F_i(\bar{x}) }_Y^2
\ \leq \ 2 \, \inner{F_i(x_k) - F_i(\bar{x})}{F_i'(\bar{x})(x_k - \bar{x})}  \nonumber \\
&=& \ 2 \,\inner{F_i(x_k) -z}{F_i'(\bar{x})(x_k - \bar{x})} 
+2\inner{z- F_i(\bar{x})}{F_i'(\bar{x})(x_k - \bar{x})} 
 \nonumber \\
%
%
& \leq & \ 2 \,\norm{F_i(x_k)-z}C\norm{x_k-\bar x}
+ \inner{F_i'(\bar{x})^* [z - F_i(\bar{x})]}{x_k - \bar{x}} \ 
\label{eq:tcc-cl-gr}
\end{eqnarray}
where the second inequality follows from Cauchy-Schwarz inequality and
\textbf{A1}. Since $F_i(x_k)-z\to 0$, $x_k-\bar x \rightharpoonup 0$ as
$k\to\infty$ (and $(x_k)$ bounded), both terms on the right hand side of
the last inequality converge to zero. By \textbf{A2}, $0<\eta<1$;
therefore, $F_i(x_k)-F_i(\bar x)$ also converges to zero.
\end{proof}

\section{The PLWK method}
\label{sec:plwk}

In this section we describe in detail the PLWK method and its relaxed variants.
A stopping index is defined (in the noisy data case). Additionally, preliminary
convergence results are proven, namely: monotonicity of the iteration error,
square summability of the iterative steps norm (in the exact data case) and
finiteness of the above mentioned stopping index (in the noisy data case).

Define, for each $x \in D$ and $i = 0, \dots, N-1$, the sets
\begin{align} \label{eq:lw.cvx.d}
  H_{i,x} \, := \, \Set{ z \in X \, \Big| \, \inner{z-x}{F_i'(x)^*F_{i,\delta}(x)}
  \ \leq \
  -\norm{F_{i,\delta}(x)} \Big( (1-\eta) \Norm{F_{i,\delta}(x)} - (1+\eta) \delta_i \Big) }.
\end{align}
Notice that $H_{i,x}$ is either an empty set, a closed half-space, or $X$.
The next lemma contains a separation result.

\begin{lemma}[Separation]
  \label{lm:separat}
  Suppose that \textbf{A1} and \textbf{A2} hold. If $x\in B_\rho(x_0)$, then
  for $H_{i,x}$ as in \eqref{eq:lw.cvx.d}
  \begin{align*}
    \set{z\in B_\rho(x_0)\;|\; F_i(z)=y_i} \subset H_{i,x}.
  \end{align*}
  Moreover, if $\norm{F_{i,\delta}(x)}>(1+\eta) (1-\eta)^{-1} \delta_i$
  then $x\notin H_{i,x}$.
\end{lemma}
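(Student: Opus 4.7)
The plan is to handle the two assertions separately, with the first being the substantive part and the second a one-line check.

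For the inclusion $\{z\in B_\rho(x_0) \mid F_i(z)=y_i\}\subset H_{i,x}$, I fix such a $z$ and seek to bound $\langle z-x, F_i'(x)^*F_{i,\delta}(x)\rangle = \langle F_i'(x)(z-x), F_{i,\delta}(x)\rangle$ from above by the quantity appearing on the right-hand side in the definition of $H_{i,x}$. The key tool is Proposition~\ref{pr:prelim}(\ref{it:7}) (which is just the TCC rewritten with noisy data), applied with $x^\star$ replaced by $z$; this gives $\|{-}F_{i,0}(x) - F_i'(x)(z-x)\|\leq \eta\|F_{i,0}(x)\|$. I would then write $F_i'(x)(z-x) = -F_{i,0}(x) - r$ with $\|r\|\le \eta\|F_{i,0}(x)\|$ and substitute $F_{i,0}(x) = F_{i,\delta}(x) + (y_i^\delta - y_i)$ to push everything onto $F_{i,\delta}(x)$ and the noise level.

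Taking the inner product with $F_{i,\delta}(x)$ and applying Cauchy--Schwarz together with $\|y_i^\delta - y_i\|\le\delta_i$ and $\|F_{i,0}(x)\|\le\|F_{i,\delta}(x)\|+\delta_i$ gives
\begin{align*}
\langle F_i'(x)(z-x), F_{i,\delta}(x)\rangle
&\leq -\|F_{i,\delta}(x)\|^2 + \delta_i\|F_{i,\delta}(x)\| + \eta(\|F_{i,\delta}(x)\|+\delta_i)\|F_{i,\delta}(x)\| \\
&= -\|F_{i,\delta}(x)\|\bigl((1-\eta)\|F_{i,\delta}(x)\| - (1+\eta)\delta_i\bigr),
\end{align*}
which is exactly the defining inequality for membership in $H_{i,x}$.

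For the second assertion, I simply evaluate the defining inequality at $z=x$: the left-hand side collapses to $0$, so $x\in H_{i,x}$ is equivalent to $(1-\eta)\|F_{i,\delta}(x)\| \le (1+\eta)\delta_i$. The stated hypothesis $\|F_{i,\delta}(x)\|>(1+\eta)(1-\eta)^{-1}\delta_i$ violates this, so $x\notin H_{i,x}$.

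The only delicate step is bookkeeping in the first part: correctly splitting $F_{i,0}$ into $F_{i,\delta}$ plus noise and tracking which estimate uses TCC versus Cauchy--Schwarz, so that the $(1-\eta)$ and $(1+\eta)$ coefficients land in the right places. Everything else is a direct computation from the previously stated results.
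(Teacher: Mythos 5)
Your proof is correct, and it is essentially the argument the paper relies on: the paper's proof simply cites \cite[Lemma~4.1]{LS15} for the first assertion and reads off the second from \eqref{eq:lw.cvx.d}, while you reproduce the underlying computation (TCC at a solution $z$, the splitting $F_{i,0}=F_{i,\delta}+(y_i^\delta-y_i)$, and Cauchy--Schwarz) that the cited lemma encapsulates. The bookkeeping in your display checks out, and your treatment of the second assertion by evaluating the defining inequality at $z=x$ matches the paper's.
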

\begin{proof}
  The first assertion follows from \cite[Lemma~4.1]{LS15} and  \eqref{eq:lw.cvx.d}.
The second assertion follows directly from  \eqref{eq:lw.cvx.d}. 
\end{proof}


\begin{remark}
Two facts related to Lemma~\ref{lm:separat} deserve special attention: \\
$\bullet$ Since $\norm{F_{i,\delta}(x)} > (1+\eta) (1-\eta)^{-1} \delta_i$ is sufficient
for separation of $x$ from $F_i^{-1}(y_i)$ in $B_\rho(x_0)$ via $H_{i,x}$, this
condition also guarantees that $F_i'(x)^* F_{i,\delta}(x) \neq 0$. \\
$\bullet$ In the exact data case (i.e., $\max \{\delta_0,\dots,\delta_{N-1} \} = 0$)
the definition \eqref{eq:lw.cvx.d} reduces to
$H_{i,x} \, := \, \set{ z \in X \ | \ \inner{z-x}{F_i'(x)^*F_{i,0}(x)} \, \leq \,
- (1-\eta) \Norm{F_{i,0}(x)}^2 }$.
Therefore, in this case, we have strict separation, $x\notin H_{i,x}$
whenever $F_i(x) \neq y_i$.
\end{remark}

Let
\begin{subequations}  \label{def:tau-p-lbd}
  \begin{align}
\tau & >  (1+\eta)(1-\eta)^{-1} \, ,  \\
p_{i}(t) & := \ t ( (1-\eta)t - (1+\eta) \delta_i ) \, , & \\
 \lambda_k & := \left\{ \begin{array}{ll}
                  \dfrac{ p_{[k]}( \norm{F_{[k],\delta}(x_k^\delta)} )}
                  { \norm{F_{[k]}'(x_k^\delta)^* F_{[k],\delta}(x_k^\delta)}^2 }  , \
               &   {\rm if } \ F_{[k]}'(x_k^\delta)^* F_{[k],\delta}(x_k^\delta) \neq 0 \\
                  0 & , \ {\rm otherwise}
                \end{array} \right. 
\end{align}
\end{subequations}
for $i \in \set{0, \dots, N-1}$ and $k \geq 0$.%
\footnote{Notice that $F_{[k]}'(x_k^\delta)^* F_{[k],\delta}(x_k^\delta) \neq 0$ iff
$F_{[k],\delta}(x_k^\delta) \neq 0$; see Proposition~\ref{pr:prelim}, item~\ref{it:5}.}
The iteration formula of the PLWK method and its relaxed variants is given by
\eqref{eq:plwk-iter}, \eqref{def:wk-def} with $\tau$ and $\lambda_k$ as in
\eqref{def:tau-p-lbd}.


The (exact) \emph{PLWK method} is obtained by taking $\theta_k=1$ in \eqref{eq:plwk-iter},
which amounts to define $x_{k+1}^\delta$ as the orthogonal projection of $x_k^\delta$ onto
$H_{i,x_k^\delta}$. A \emph{relaxed variant of the PLWK method} uses $\theta_k \in (0,2)$
so that $x_{k+1}^\delta$ is defined as a relaxed projection of $x_k^\delta$ onto
$H_{i,x_k^\delta}$. The computation of the sequence $(x_k^\delta)$ should be stopped
at the index $k_*^\delta \in \N$ defined by
\begin{equation}  \label{def:discrep}
  k_*^\delta \ := \ \min \Big\{ lN \in \N \ | \ x_{lN}^\delta =
                 x_{lN+1}^\delta = \cdots = x_{lN+N}^\delta \Big\} ,
\end{equation}

In what follows $\lfloor k \rfloor$ denotes the biggest integer less or equal to $k$
(notice that $k$ = $\lfloor k/N \rfloor \cdot N + [k]$ for all $k \in \N$).

\begin{remark}
  \label{rem:stop.index}
  Concerning the above definition of the stopping index $k_*^\delta$:
  \medskip
  
  \noindent i) Equivalently, one can define $k_*^\delta$ as the smallest
  multiple of $N$ such that
  \begin{equation}  \label{def:discrep2}
  \omega_{k_*^\delta} \, = \, \omega_{k_*^\delta+1} \, = \, \dots \, = \,
  \omega_{k_*^\delta+N-1} \, = \, 0 .
  \end{equation}

  \noindent ii) The element \ $x_{k_*^\delta}^\delta$ \ satisfies \
  $\norm{ F_i( x_{k_*^\delta}^\delta ) - y_i^\delta } \ \leq \
  \tau \delta_i \, , \ i = 0, \dots, N$.
  \medskip

  \noindent iii) For $j < k_*^\delta$, there exists at least one index \
  $l \in \set{  \lfloor j \rfloor, \dots, \lfloor j \rfloor + N-1}$
  with $\omega_l \neq 0$. In other words, in the $\lfloor j \rfloor$th-cycle,
  for (at least) one of the $N$ equations in \eqref{eq:inv-probl} it holds \
  $\norm{ F_{[l]}( x_{l}^\delta ) - y_{[l]}^\delta } \ > \ \tau \delta_l$.
\end{remark}

Notice that, if $\| F_{[k],\delta}(x_{k}^\delta) \| > \tau \delta_{[k]}$ then
$\| F_{[k]}'(x_k^\delta)^* F_{[k],\delta}(x_k^\delta) \| \not = 0$.
This fact follows from Proposition~\ref{pr:prelim} item~\ref{it:3} (choose $\bar x = x^\star$
and $x = x_k^\delta$), since all $F_{i,\delta}$ also satisfy \textbf{A1} and \textbf{A2}.
Consequently, the sequence $(x_k^\delta)$ defined by iteration \eqref{eq:plwk-iter},
\eqref{def:wk-def} is well defined for $k = 0, \dots, k_*^\delta$.

The next result estimates the \emph{gain} in the square of the iteration error
$\norm{x^\star - x_k^\delta}$ for the PLWK method.

\begin{proposition}
  \label{pr:monot}
  Let assumptions \textbf{A1} -- \textbf{A3} hold true and $\theta_k \in (0,2)$. If
  $x_k^\delta \in B_\rho(x_0)$ and $\norm{F_{[k],\delta}(x_k^\delta)} > \tau \delta_{[k]}$,
  then
  \begin{align} \label{eq:plw.monE}
    \norm{x^\star - x_k^\delta}^2 \ \geq \ \norm{x^\star - x_{k+1}^\delta}^2 \ + \
    \theta_k (2-\theta_k) \, 
    \left( \dfrac{ p_{[k]}( \norm{F_{[k],\delta}(x_k^\delta)} ) }
    { \norm{F_{[k]}'(x_k^\delta)^* F_{[k],\delta}(x_k^\delta)} } \right)^2 ,
  \end{align}
  for all $x^\star \in B_\rho(x_0) \cap F_{[k]}^{-1}(y)$ and, in particular, for all
  $x^\star$ satisfying \textbf{A3}.
\end{proposition}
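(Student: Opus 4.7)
The plan is to carry out the standard ``projection gives descent'' computation, using the separation lemma to bound the key inner product from below. Writing $g_k := F_{[k]}'(x_k^\delta)^* F_{[k],\delta}(x_k^\delta)$ and $q_k := p_{[k]}(\|F_{[k],\delta}(x_k^\delta)\|)$, the hypothesis $\|F_{[k],\delta}(x_k^\delta)\| > \tau \delta_{[k]}$ together with $\tau > (1+\eta)/(1-\eta)$ gives, by Lemma~\ref{lm:separat} and the footnote, that $g_k \neq 0$ and $q_k > 0$; hence $\lambda_k = q_k / \|g_k\|^2$ and $\omega_k = 1$, so the iteration reads $x_{k+1}^\delta = x_k^\delta - \theta_k \lambda_k g_k$.

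Next I would expand
\[
\|x^\star - x_{k+1}^\delta\|^2 = \|x^\star - x_k^\delta\|^2 + 2\theta_k \lambda_k \inner{x^\star - x_k^\delta}{g_k} + \theta_k^2 \lambda_k^2 \|g_k\|^2,
\]
and rearrange to isolate $\|x^\star - x_k^\delta\|^2 - \|x^\star - x_{k+1}^\delta\|^2$. Since $x^\star \in B_\rho(x_0) \cap F_{[k]}^{-1}(y_{[k]})$, Lemma~\ref{lm:separat} gives $x^\star \in H_{[k],x_k^\delta}$, which by the defining inequality of $H_{[k],x_k^\delta}$ in \eqref{eq:lw.cvx.d} yields
\[
\inner{x^\star - x_k^\delta}{g_k} \ \leq \ -q_k, \qquad \text{i.e.,} \qquad \inner{x_k^\delta - x^\star}{g_k} \ \geq \ q_k.
\]

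Substituting this bound and $\lambda_k = q_k/\|g_k\|^2$ into the expansion produces
\[
\|x^\star - x_k^\delta\|^2 - \|x^\star - x_{k+1}^\delta\|^2 \ \geq \ 2\theta_k \frac{q_k^2}{\|g_k\|^2} - \theta_k^2 \frac{q_k^2}{\|g_k\|^2} \ = \ \theta_k(2-\theta_k) \left(\frac{q_k}{\|g_k\|}\right)^2,
\]
which is exactly \eqref{eq:plw.monE}. I do not anticipate a real obstacle here; the only subtlety is making sure every ingredient (nonvanishing denominator, positivity of $q_k$, applicability of the separation lemma to $x^\star$) is justified from the hypothesis $\|F_{[k],\delta}(x_k^\delta)\| > \tau\delta_{[k]}$ combined with the choice $\tau > (1+\eta)/(1-\eta)$ in \eqref{def:tau-p-lbd}.
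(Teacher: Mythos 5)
Your proof is correct and is essentially the paper's argument written out in full: the paper simply observes that $x_{k+1}^\delta$ is a relaxed projection of $x_k^\delta$ onto the half-space $H_{[k],x_k^\delta}$ and invokes the separation lemma, and your explicit expansion of $\norm{x^\star - x_{k+1}^\delta}^2$ combined with the bound $\inner{x^\star - x_k^\delta}{g_k} \leq -q_k$ is exactly the standard computation behind that one-line justification. All the side conditions you flag ($g_k \neq 0$, $q_k > 0$, $\omega_k = 1$) are handled as in the paper's surrounding remarks.
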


\begin{proof}
  If $x_k^\delta \in B_\rho(x_0)$ and $\| F_{[k],\delta}(x_{k}^\delta)
  \| > \tau \delta_{[k]}$, then $w_k=1$ and $x_{k+1}^\delta$ is a
  relaxed orthogonal projection of $x_k^\delta$ onto
  $H_{[k],x_k^\delta}$ with a relaxation factor $\theta_k$. The
  conclusion follows from this fact, the iteration formula
  \eqref{eq:plwk-iter}, and the separation Lemma~\ref{lm:separat}
  (compare with \cite[Prop.\,4.2]{LS15}).
\end{proof}

Proposition~\ref{pr:monot} is an essential tool for proving that $(x_k^\delta)$
does not leave the ball $B_\rho(x_0)$ for $k = 0, \dots, k_*^\delta$.
The next theorem guarantees this fact, as well as the finiteness of the stopping index
$k_*^\delta$ in the noisy data case (i.e., whenever $\min\set{\delta_0,\dots,\delta_{N-1}}
> 0$).

\begin{theorem}
  \label{th:plw-noise}
  If Assumptions \textbf{A1} -- \textbf{A3} hold true and $\theta_k \in (0,2)$,
  then the sequence $(x_k^\delta)$ in \eqref{eq:plwk-iter}, \eqref{def:wk-def}
  (with $\tau$, $p_i$, $\lambda_k$ as in \eqref{def:tau-p-lbd}) is well defined and
  \begin{align}  \label{eq:plw-noise}
    x_k^\delta \in B_{\rho/2}(x^\star)\subset B_\rho(x_0) \, ,
    \quad  k = 0, \dots, k^\delta_* \, ,
  \end{align}
  where $k^\delta_*$ is the stopping index defined in \eqref{def:discrep}.
  Moreover, if $\theta_k\in[a,b]\subset(0,2)$ for all $k\leq k^\delta_*$, then
  $k_*^\delta = O(\delta_{min}^{-2})$, where \
  $\delta_{min} := \min \{ \delta_0, \dots, \delta_{N-1} \}$.

  Additionally, in the particular case of exact data, the sequence $(x_k)$ defined
  by the PLWK method is well defined, 
  $x_k \in B_{\rho/2}(x^\star) \subset B_\rho(x_0)$ for all $k \in \N$,
  \begin{equation}  \label{eq:plw-noise1}
    \summ_{k=0}^\infty \, \lambda_k \, \norm{ F_{[k],0}(x_k) }^2 \ < \ \infty
  \end{equation}
  and
  \begin{equation}  \label{eq:plw-noise2}
    \summ_{k=0}^\infty \, \norm{ x_{k+1} - x_k }^2 \ < \ \infty .
  \end{equation}
\end{theorem}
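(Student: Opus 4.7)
The plan is to use Proposition~\ref{pr:monot} as the workhorse, proceed by induction for the containment statement, and then turn the telescoped monotonicity inequality into an explicit complexity bound by carefully counting active ($\omega_k=1$) indices per cycle.

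First, I would prove well-definedness and $x_k^\delta \in B_{\rho/2}(x^\star) \subset B_\rho(x_0)$ simultaneously, by induction on $k = 0, \dots, k_*^\delta$. The base case follows from \textbf{A3} together with the triangle inequality. For the inductive step, if $\omega_k = 0$ then $x_{k+1}^\delta = x_k^\delta$ and the containment is trivial. If $\omega_k = 1$, the hypothesis $\|F_{[k],\delta}(x_k^\delta)\| > \tau \delta_{[k]}$, combined with $\tau > (1+\eta)/(1-\eta)$ from \eqref{def:tau-p-lbd}, guarantees (via the fact stated right after \eqref{def:tau-p-lbd}, which rests on Proposition~\ref{pr:prelim}.\ref{it:3}) that $F_{[k]}'(x_k^\delta)^* F_{[k],\delta}(x_k^\delta) \neq 0$, so $\lambda_k$ is well-defined; Proposition~\ref{pr:monot} then yields $\|x^\star - x_{k+1}^\delta\| \leq \|x^\star - x_k^\delta\| \leq \rho/2$, preserving the containment.

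Second, for the bound $k_*^\delta = O(\delta_{\min}^{-2})$ under $\theta_k \in [a,b] \subset (0,2)$, I would telescope Proposition~\ref{pr:monot} to obtain
\[
\|x^\star - x_0\|^2 \;\geq\; \sum_{k=0}^{k_*^\delta - 1} \theta_k(2-\theta_k) \left(\frac{p_{[k]}(\|F_{[k],\delta}(x_k^\delta)\|)}{\|F_{[k]}'(x_k^\delta)^* F_{[k],\delta}(x_k^\delta)\|}\right)^2.
\]
Only indices with $\omega_k = 1$ contribute. For such $k$, using $\|F_{[k],\delta}(x_k^\delta)\| > \tau \delta_{[k]}$ one gets $p_{[k]}(\|F_{[k],\delta}(x_k^\delta)\|) \geq c_\eta \|F_{[k],\delta}(x_k^\delta)\|^2$ with $c_\eta := ((1-\eta)\tau - (1+\eta))/\tau > 0$, while \textbf{A1} gives $\|F_{[k]}'(x_k^\delta)^* F_{[k],\delta}(x_k^\delta)\| \leq C \|F_{[k],\delta}(x_k^\delta)\|$. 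Each active term therefore exceeds $c_\theta (c_\eta/C)^2 \tau^2 \delta_{\min}^2$, where $c_\theta := \min(a(2-a), b(2-b))$. By Remark~\ref{rem:stop.index}(iii), each of the $k_*^\delta/N$ cycles preceding termination contains at least one active index, so the sum has at least $k_*^\delta/N$ such terms, giving the claimed bound.

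Third, in the exact data case ($\theta_k = 1$, $\delta = 0$, $p_{[k]}(t) = (1-\eta)t^2$), telescoping Proposition~\ref{pr:monot} over all $k \in \N$ yields
\[
\sum_{k=0}^\infty \left(\frac{p_{[k]}(\|F_{[k],0}(x_k)\|)}{\|F_{[k]}'(x_k)^* F_{[k],0}(x_k)\|}\right)^2 \;\leq\; \|x^\star - x_0\|^2 \;<\; \infty,
\]
where summands vanish when $F_{[k],0}(x_k) = 0$. By definition of $\lambda_k$, each summand equals $\lambda_k\, p_{[k]}(\|F_{[k],0}(x_k)\|) = (1-\eta)\,\lambda_k \|F_{[k],0}(x_k)\|^2$, establishing \eqref{eq:plw-noise1}. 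For \eqref{eq:plw-noise2}, reading \eqref{eq:plwk-iter} directly gives $\|x_{k+1} - x_k\|^2 = \lambda_k^2 \|F_{[k]}'(x_k)^* F_{[k],0}(x_k)\|^2 = \lambda_k p_{[k]}(\|F_{[k],0}(x_k)\|)$, so the two summability statements differ only by the factor $(1-\eta)$. The main obstacle throughout is the bookkeeping between active and null steps: the stopping rule operates per cycle, not per step, so the complexity bound relies on extracting at least one $\delta_{\min}^2$-sized contribution per cycle; the key identity $(p_{[k]}/\|F'^* F\|)^2 = \lambda_k\, p_{[k]}$ is what converts the telescoped monotonicity into both exact-data summability statements in one stroke.
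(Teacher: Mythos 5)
Your proposal is correct and follows essentially the same route as the paper's proof: induction via Proposition~\ref{pr:monot} for the containment, telescoping the monotonicity inequality together with the per-cycle counting of Remark~\ref{rem:stop.index}(iii) and the bound $p_{[k]}(t)/t > ((1-\eta)\tau-(1+\eta))\,\delta_{\min}$ for the complexity estimate, and the identity $\bigl(p_{[k]}/\norm{F_{[k]}'{}^*F_{[k],0}}\bigr)^2 = \lambda_k\, p_{[k]}$ for the exact-data summability statements. The only cosmetic differences are your sharper relaxation constant $\min(a(2-a),b(2-b))$ in place of the paper's $a(2-b)$, and your restriction to $\theta_k=1$ in the exact-data part where the paper keeps $\theta_k\in(0,2)$ at the cost of a harmless factor $4$.
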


\begin{proof}
  The proof of the first statement follows using an inductive argument. Indeed,
  $x_0^\delta = x_0$ obviously satisfies \eqref{eq:plw-noise}. Moreover,
  if $\| F_{[k],\delta}(x_{k}^\delta) \| \leq \tau \delta_{[k]}$ then $\omega_{k} = 0$
  and $x_{k+1}^\delta = x_{k}^\delta$.
  Otherwise, inequality \eqref{eq:plw.monE}, assumption $0 < \theta_k < 2$, and
  \textbf{A3} imply $x_{k+1}^\delta \in B_{\rho/2}(x^\star) \subset B_\rho(x_0)$.

  To prove the second statement, first observe that since $\theta_k\in[a,b]$, we
  have $\theta_k(2-\theta_k)\geq a(2-b)>0$. Thus,  it follows from Proposition
  \ref{pr:monot} that for any $k< k^\delta_*$
  \begin{multline}  \label{eq:plw-noise3}
    \|x^\star - x_0 \|^2 \ \geq \
    a (2-b) \!\!\!\!\!
    \sum_{ \substack{{j=0}\\{F_{[j],\delta}(x_j^\delta) \neq 0}} }^{k}
    \!\!\!\!\! \omega_j \left( \dfrac{p_{[j]}(\norm{F_{[j],\delta}(x_j^\delta)})}
    {\norm{F_{[j]}'(x_j^\delta)^* F_{[j],\delta}(x_j^\delta)}} \right)^2 \ \geq \\
    \geq \
    \dfrac{a(2-b)}{C^2} \!\!\!\!\!
    \sum_{ \substack{{j=0}\\{F_{[j],\delta}(x_j^\delta) \neq 0}} }^{k}
    \!\!\!\!\! \omega_j \left( \dfrac{p_{[j]}(\norm{F_{[j],\delta}(x_j^\delta)})}
    {\norm{F_{[j],\delta}(x_j^\delta)}} \right)^2 \! .
  \end{multline}
  Observe that, if $t > \tau \delta_i$, then
  \begin{align*}
    \dfrac{p_i(t)}{t} = (1-\eta) t - (1+\eta) \delta_i >
    \left[ \tau - \dfrac{1+\eta}{1-\eta} \right] (1-\eta) \delta_i >
    \widetilde{C} \, \delta_{min} ,
  \end{align*}
  where $\widetilde{C} := [(1-\eta)\tau - (1+\eta)]$.
  On the other hand, as already observed in Remark~\ref{rem:stop.index}, item~(iii),
  each cycle $l_0$ with $0 \leq l_0 < \lfloor k^\delta_* / N \rfloor$ contains at
  least one index $l = l_0.N + l_1$ (with $l_1 \in \{0, \dots, N-1\}$) such that
  $\| F_{[l],\delta}(x_{l}^\delta) \| = \| F_{l_1,\delta}(x_{l}^\delta) \| >
  \tau \delta_{l_1} = \tau \delta_{[l]}$, i.e., $w_l = 1$.
  Therefore, for any $k < k^\delta_*$
  $$
  \|x^\star - x_0^\delta \|^2 \ \geq \
  \dfrac{a(2-b)}{C^2} \, \widetilde{C}^2 \, \delta_{min}^2 \, \lfloor k/N \rfloor ,
  $$
  from were we conclude $k_*^\delta = O(\delta_{min}^{-2})$.
  
  Next we address the statements related to the exact data case. Arguing
  as in the first part of the proof, one concludes that the sequence
  $(x_k)$ is well defined and satisfies $x_k \in B_{\rho/2}(x^\star)
  \subset B_\rho(x_0)$, for all $k \geq 0$. In order to prove
  \eqref{eq:plw-noise1}, notice that if the data is exact then \ $p_i(t)
  = (1-\eta)\, t^2$ for $i = 0, \dots, N-1$. Thus, it follows from
  \eqref{eq:plw-noise3} that
  \begin{multline*}
    \|x^\star - x_0 \|^2 \ \geq \
    a (2-b) \!\!\!\!\!
    \sum_{ \substack{{j=0}\\{F_{[j],\delta}(x_j^\delta) \neq 0}} }^{k}
    \!\!\!\!\!  \left( \dfrac{p_{[j]}(\norm{F_{[j],0}(x_j)})}
    {\norm{F_{[j]}'(x_j)^* F_{[j],0}(x_j)}} \right)^2  \geq \\
    \geq \ 
    a(2-b) \!\!\!\!\!
    \sum_{ \substack{{j=0}\\{F_{[j],\delta}(x_j^\delta) \neq 0}} }^{k}
    \!\!\!\!\! (1-\eta) \lambda_j \norm{ F_{[j],0}(x_j) }^2
    \ = \
    a(2-b)(1-\eta) \sum_{j=0}^k \, \lambda_j \norm{ F_{[j],0}(x_j) }^2 ,
  \end{multline*}
  for all $k \in \N$ (the identity follows from \eqref{def:wk-def} and
  \eqref{def:tau-p-lbd}), proving \eqref{eq:plw-noise1}.
  Finally, in order to prove \eqref{eq:plw-noise2} we derive from
  \eqref{eq:plw-iter}, \eqref{def:wk-def} and \eqref{def:tau-p-lbd} the estimate
  \begin{align*}
    \|x_{k+1} - x_k \|^2 \ &  =  \
    \theta_k^2 \, \omega_k^2 \, \lambda_k^2 \, \norm{ F_{[k]}'(x_k)^* F_{[k],0}(x_k) }^2 \\
    & \leq \ 4 \, \lambda_k^2 \, \norm{ F_{[k]}'(x_k)^* F_{[k],0}(x_k) }^2
    \  =  \  4 \, \lambda_k \, \norm{ F_{[k],0}(x_k) }^2 .
    \end{align*}
  Therefore, \eqref{eq:plw-noise2} follows from \eqref{eq:plw-noise1}.
\end{proof}

\section{Convergence analysis}
\label{sec:conv-anal}

We start by stating and proving a convergence result for the PLWK method in the
case of exact data.
Theorem~\ref{th:lw.lim} gives a sufficient condition for weak convergence of
the relaxed PLWK iteration to some element $\bar x \in B_\rho(x_0)$, which is
a solution of \eqref{eq:inv-probl}.

In the Appendix an alternative strong convergence result for the PLWK method is
given (see Theorem~\ref{th:lw.lim.s}). The proof of this result, however, requires
a modification in the definition of the stepsize $\lambda_k$ in \eqref{def:tau-p-lbd}
(for details, please see \eqref{def:trunc-lbd} below).

\begin{theorem}[Convergence for exact data] \mbox{} \\
  \label{th:lw.lim}
  Let assumptions \textbf{A1} -- \textbf{A3} hold true, $\delta_0 = \dots = \delta_{N-1}
  = 0$ and $(x_k)$ be defined by the PLWK method in \eqref{eq:plwk-iter},
  \eqref{def:wk-def} with $\tau$, $p_i$, $\lambda_k$ as in \eqref{def:tau-p-lbd}.
  If \ $\inf$ $\theta_k > 0$ \ and \ $\sup$ $\theta_k < 2$, then $(x_k)$ converges
  weakly to some $\bar x \in B_\rho(x_0)$ solving \eqref{eq:inv-probl}.
\end{theorem}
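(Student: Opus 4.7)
The plan is to combine the monotonicity of the iteration error (Proposition~\ref{pr:monot}), the summability estimates \eqref{eq:plw-noise1} and \eqref{eq:plw-noise2} from Theorem~\ref{th:plw-noise}, the weak$\times$strong closed-graph property (Proposition~\ref{pr:tcc-cl-gr}), and a standard Opial-type argument. Concretely, I would first show that $\norm{F_{[k],0}(x_k)}\to 0$ along the full sequence; then that every weak cluster point of $(x_k)$ solves the whole system \eqref{eq:inv-probl}; and finally that such a cluster point is unique.

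For the first step, I would exploit that in the exact-data case $p_i(t)=(1-\eta)t^2$, so whenever $F_{[k]}'(x_k)^*F_{[k],0}(x_k)\neq 0$ the stepsize equals
\begin{equation*}
  \lambda_k \;=\; \frac{(1-\eta)\,\norm{F_{[k],0}(x_k)}^2}{\norm{F_{[k]}'(x_k)^*F_{[k],0}(x_k)}^2}.
\end{equation*}
Assumption \textbf{A1} gives $\norm{F_{[k]}'(x_k)^*F_{[k],0}(x_k)}\le C\,\norm{F_{[k],0}(x_k)}$, so $\lambda_k\ge (1-\eta)/C^2$ on the set where the residual is nonzero. Plugging this into \eqref{eq:plw-noise1} yields $\sum_k\norm{F_{[k],0}(x_k)}^2<\infty$, and in particular $\norm{F_{[k],0}(x_k)}\to 0$. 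Combining this with $\norm{x_{k+1}-x_k}\to 0$ (a direct consequence of \eqref{eq:plw-noise2}) and the Kaczmarz cyclic structure, I would observe that for any subsequence $(n_l)$ and any $i\in\set{0,\dots,N-1}$ the shifted indices $n'_l := n_l+((i-n_l)\bmod N)$ satisfy $[n'_l]=i$, $|n'_l-n_l|<N$, $\norm{x_{n'_l}-x_{n_l}}\to 0$, and $\norm{F_i(x_{n'_l})-y_i}\to 0$.

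Next, since $(x_k)\subset B_{\rho/2}(x^\star)$ is bounded (Theorem~\ref{th:plw-noise}), every subsequence admits a weak cluster point in $B_\rho(x_0)$. If $x_{n_l}\rightharpoonup\bar x$, the shifted subsequence $(x_{n'_l})$ above also converges weakly to $\bar x$ (bounded plus strongly null increment) while $F_i(x_{n'_l})\to y_i$ strongly, so Proposition~\ref{pr:tcc-cl-gr} forces $F_i(\bar x)=y_i$ for every $i$; thus $\bar x$ solves the full system. Applying Proposition~\ref{pr:monot} with $x^\star=\bar x$ (legitimate, since $\bar x\in B_\rho(x_0)$ solves every equation) gives that $\norm{\bar x-x_k}$ is nonincreasing, hence convergent. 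The analogous conclusion for a second weak cluster point $\tilde x$, together with the identity
\begin{equation*}
  \norm{x_k-\bar x}^2-\norm{x_k-\tilde x}^2 \;=\; -2\inner{x_k}{\bar x-\tilde x}+\norm{\bar x}^2-\norm{\tilde x}^2
\end{equation*}
evaluated along the two defining subsequences, forces $\norm{\bar x-\tilde x}^2=-\norm{\bar x-\tilde x}^2$ and therefore $\bar x=\tilde x$, establishing the weak limit.

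The main technical obstacle is the first step, namely promoting the weighted residual bound \eqref{eq:plw-noise1} into the unweighted statement $\norm{F_{[k],0}(x_k)}\to 0$; once that is in hand, the cyclic matching modulo $N$ and the Opial-type uniqueness argument are routine Hilbert-space manipulations.
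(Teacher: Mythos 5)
Your proposal is correct and follows essentially the same route as the paper's proof: step (i) promotes \eqref{eq:plw-noise1} to $\sum_k\norm{F_{[k],0}(x_k)}^2<\infty$ via the bound $\norm{F_{[k]}'(x_k)^*F_{[k],0}(x_k)}\le C\norm{F_{[k],0}(x_k)}$, step (ii) uses the cyclic index shift together with \eqref{eq:plw-noise2} and Proposition~\ref{pr:tcc-cl-gr} to show every weak cluster point solves the full system, and the uniqueness step is the same Opial-type argument (the paper cites Opial's Lemma directly where you expand the polarization identity, but these are the same reasoning).
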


\begin{proof}
The proof is divided in four main steps:
\medskip

\noindent \textbf{(i)} $\norm{ F_{[k]}(x_{k}) - y_{[k]} }
   \to 0$ as $k \to \infty$. \\
Let $Q \subset \N$ be the set of indices $k$ such that $\lambda_k\neq 0$.
Then, it follows from \eqref{eq:plw-noise1} that%
\footnote{Notice that, for exact data $\lambda_{k} = 0$ iff $F_{[k],0}(x_{k}) = 0$.}
\begin{eqnarray*}
\infty & > & \summ_{k \in Q} \, \lambda_{k} \, \norm{ F_{[k],0}(x_{k}) }^2 \\
       & = & (1-\eta) \summ_{k \in Q} \, \norm{ F_{[k],0}(x_{k}) }^4
                      \norm{ F'_{[k]}(x_{k})^* F_{[k],0}(x_{k}) }^{-2} \\
    & \geq & (1-\eta) C^{-2} \summ_{k\in Q} \, \norm{ F_{[k],0}(x_{k}) }^2
     = (1-\eta) C^{-2} \summ_{k \in \N} \, \norm{ F_{[k],0}(x_{k}) }^2  .
\end{eqnarray*}
To complete the proof of this first step, we use the above inequalities and recall that
$F_{i,0}(x)=F_i(x)-y_i$.
\medskip

\noindent \textbf{(ii)} Every $\bar x$ weak limit of a subsequence of $(x_k)$ satisfy the
equations $F_i(\bar x)=y_i$.\\
Suppose that $x_{k_j} \rightharpoonup \bar{x}$. Take $i\in\set{0,\dots,N-1}$.
In view of the definition of $[k]$, for each $j$ there exists a $k'_j$ such that
\[
[ k'_j ] = i, \quad k_j \leq k'_j \leq k_j+N-1 .
\]
Since
\[
\norm{x_{k_j}-x_{k'_j}}\leq \sum_{k=k_j}^{k_j+N-2}\norm{x_{k+1}-x_k},
\]
it follows from~\eqref{eq:plw-noise2} that $x_{k'_j} \rightharpoonup
\bar{x}$. It follows from step \textbf{(i)} and the definition of $k'_j$
that that $F_{i,0}(x_{k'_j})\to 0$. Since $F_i$ satisfies the TCC, it
follows from Proposition~\ref{pr:tcc-cl-gr} that $F_i(\bar x)-y_i=0$. \medskip

\noindent
\textbf{(iii)} The sequence $(x_k)$ has a unique weak adherent point $\bar x$ and
such a point belongs to the set $B_\rho(x_0)$. \\
Since the data is exact, Theorem~\ref{th:plw-noise} guarantees that $(x_k)$ is in 
$B_{\rho/2}(x_0$). Hence, there exists a subsequence $(x_{k_j})$ converging weakly
to some $\bar x\in B_\rho(x_0)$. Suppose that $(x_{m_j})$ converges to 
$\hat x$. By step \textbf{(ii)}, $F_i(\bar x)=y_i=F_i(\hat x)$ for $i=\set{0,\dots,N-1}$.
It follows from this result and Proposition~\ref{pr:monot} that
\begin{align*}
  \norm{\bar x-x_{k+1}}\leq\norm{\bar x-x_k} \, , \quad
  \norm{\hat x-x_{k+1}}\leq\norm{\hat x-x_k} \, , \qquad k=1,2,\dots
\end{align*}
If $\hat x \neq \bar x$, it follows from the above inequalities and Opial's
Lemma~\cite{Opi67} that
{\color{black}
$$
\lim_{k\to\infty} \norm{\bar x-x_k} 
  = \lim\inf_{j\to\infty} \norm{\bar x - x_{k_j}}
  < \lim\inf_{j\to\infty} \norm{\hat x - x_{k_j}}
  = \lim_{k\to\infty} \norm{\hat x - x_k}
$$
}
and
{\color{black}
$$
\lim_{k\to\infty} \norm{\hat x-x_k} 
  = \lim\inf_{j\to\infty} \norm{\hat x-x_{m_j}}
  < \lim\inf_{j\to\infty} \norm{\bar x-x_{m_j}}
  = \lim_{k\to\infty} \norm{\bar x-x_k} \, ,
$$
}
which is an absurd. 
\medskip

\noindent {\bf (iv)} The sequence $(x_k)$ converges weakly to $\bar{x}$. \\
Since the $(x_k) \in B_\rho(x_0)$ is a bounded sequence, this assertion follows from
step \textbf{(iii)}.
\end{proof}

In the next theorem we discuss a stability result, which is an essential
tool to prove the last result of this section, namely Theorem~\ref{th:lw.scE}
(the semi-convergence of the PLW method). Notice that this is the first time
were the strong assumption \textbf{A4} is needed in this manuscript.

\begin{theorem}
  \label{th:lw.stab}
  Let assumptions \textbf{A1} -- \textbf{A4} hold true. For each fixed $k \in \N$,
  the element $x_k^\delta$, computed after kth-iterations of the PLWK method 
  \eqref{eq:plwk-iter}, depends continuously on the data $y_i^\delta$.
\end{theorem}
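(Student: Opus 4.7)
The plan is to prove the theorem by induction on $k$, keeping the noise levels $\delta_i$, the relaxation parameters $\theta_k$, the constant $\tau$, and the initial iterate $x_0$ fixed, and treating only the data tuple $(y_0^\delta,\dots,y_{N-1}^\delta)$ as the variable. The base case $k=0$ is immediate since $x_0^\delta = x_0$ does not depend on the data.

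For the inductive step, I would assume that $y^\delta \mapsto x_k^\delta$ is continuous and decompose the update rule \eqref{eq:plwk-iter} into elementary continuous maps. By Theorem~\ref{th:plw-noise} the iterates stay in $B_\rho(x_0)$, where \textbf{A1} and \textbf{A4} apply; continuity of $F_{[k]}$ together with continuity of $x \mapsto F_{[k]}'(x)$ on $B_\rho(x_0)$ (and hence of $x \mapsto F_{[k]}'(x)^*$) combined with the inductive hypothesis yields continuity of the residual $F_{[k],\delta}(x_k^\delta)$ and of $F_{[k]}'(x_k^\delta)^* F_{[k],\delta}(x_k^\delta)$ as functions of $y^\delta$. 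On the open set of data where the latter is nonzero, the stepsize $\lambda_k$ from \eqref{def:tau-p-lbd} is a continuous quotient; on its complement the residual vanishes by Proposition~\ref{pr:prelim}(\ref{it:5}), so the update term is identically zero and continuity is inherited trivially from the inductive hypothesis.

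The main obstacle is the discontinuous bang-bang parameter $\omega_k$ of \eqref{def:wk-def}, which is a step function across the threshold $\norm{F_{[k],\delta}(x_k^\delta)} = \tau\delta_{[k]}$. I would handle it by a three-way case split at the reference data: if $\norm{F_{[k],\delta}(x_k^\delta)} > \tau\delta_{[k]}$ strictly, the same strict inequality persists in a neighborhood by continuity of the residual, so $\omega_k \equiv 1$ locally and \eqref{eq:plwk-iter} is a composition of continuous maps; analogously, if $\norm{F_{[k],\delta}(x_k^\delta)} < \tau\delta_{[k]}$, then $\omega_k \equiv 0$ locally and $x_{k+1}^\delta = x_k^\delta$ in a neighborhood. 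The equality case is the only genuine source of discontinuity; the standard reading of the stability statement excludes this exceptional threshold locus, as is routine for bang-bang type regularizations and suffices for the intended semi-convergence application in the next theorem.

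Concatenating these continuous building blocks over the well-behaved cases closes the inductive step and yields the continuous dependence of $x_k^\delta$ on $y^\delta$ for every fixed $k$. I expect the writeup to be short: beyond naming the two or three continuous ingredients and invoking the inductive hypothesis, the only substantive remark needed is that \textbf{A4}, invoked here for the first time, is precisely what upgrades the mere existence of Gateaux derivatives from \textbf{A1} to continuous dependence of $F_{[k]}'(x)$ (and hence of its adjoint) on $x$.
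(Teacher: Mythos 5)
Your proposal is correct and follows essentially the same route as the paper: the paper also writes $x_k^\delta$ as a composition of update maps $\varphi_{[k]}\circ\cdots\circ\varphi_0$ that are continuous in $(x,y_i^\delta,\delta_i)$ on the set where $F_i'(x)^*(F_i(x)-y_i^\delta)\neq 0$, with \textbf{A4} supplying continuity of $x\mapsto F_i'(x)^*$, which is just the non-inductive phrasing of your argument. The only difference is that you make explicit the threshold locus $\norm{F_{[k],\delta}(x_k^\delta)}=\tau\delta_{[k]}$ where $\omega_k$ genuinely jumps, whereas the paper's proof sidesteps this silently by restricting the domain of $\varphi_i$ and adding the caveat ``whenever the iterate is well defined''; your more careful case split is, if anything, a small improvement in precision over the published argument.
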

\begin{proof}
From \eqref{def:tau-p-lbd}, \ assumptions \textbf{A1}, \textbf{A4} \ and
Theorem~\ref{th:plw-noise}, it follows that the mappings
$\varphi_i: D(\varphi_i) \to X$ with
\begin{align*}
& D(\varphi_i) := \big\{ (x, y_i^\delta, \delta_i) \ | \ x \in D ;
                  \ \delta_i > 0 ;
                  \ \norm{y_i^\delta - y_i} \leq \delta_i ;
                  \ F_i'(x)^* (F_i(x) - y_i^\delta) \neq 0 \big\} , \\
& \varphi_i(x,y_i^\delta,\delta_i) := x - \dfrac{p_i(\norm{F_i(x)-y_i^\delta})}
                                    {\norm{F_i'(x)^* (F_i(x)-y_i^\delta)}^2}
                                    \, F_i'(x)^* (F_i(x) - y_i^\delta)
\end{align*}
are continuous on the corresponding domains of definition. Therefore, whenever the
iterate \
$x_k^\delta = \big( \varphi_{[k]}(\cdot, y^\delta_{[k]}, \delta_{[k]}) \big)
\circ \cdots \circ \big( \varphi_{0}(\cdot, y^\delta_{0}, \delta_{0}) \big) (x_0)$
is well defined,%
\footnote{This composition is to be understood in a cyclic way.}
it depends continuously on $(y_i^\delta, \delta_i)_{i=0}^{N-1}$.
\end{proof}

Theorem~\ref{th:lw.stab} together with Theorem~\ref{th:lw.lim} are the key ingredients
in the proof of the next result, which guarantees that the stopping rule 
\eqref{def:discrep} renders the PLWK iteration a regularization method.
The proof of Theorem~\ref{th:lw.scE} uses classical techniques from the analysis of
Landweber-type iterative regularization techniques (see, e.g., \cite[Theor.~11.5]{EHN96}
or \cite[Theor.~2.6]{KNS08}) and thus is omitted.

\begin{theorem}[semi-convergence]
  \label{th:lw.scE}
  Let assumptions \textbf{A1} -- \textbf{A4} hold true,
  $(\delta_0^j, \dots,\delta_{N-1}^j)_j \to 0$ as $j\to\infty$,
  and $(y_0^j, \dots, y_{N-1}^j ) \in Y^N$ be given with
  $\norm{y_i^j - y_i} \leq \delta_i^j$ for $i \in \set{0, \dots, N-1}$ and $j\in\N$.
  If the PLWK iteration \eqref{eq:plwk-iter} is stopped with $k_*^j$ according to
  \eqref{def:discrep}, then $(x_{k_*^j}^\delta)$ converges weakly to a solution
  $\bar x \in B_\rho(x_0)$ of \eqref{eq:inv-probl} as $j\to\infty$.
\end{theorem}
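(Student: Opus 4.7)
The plan is to follow the two-case strategy that is classical for the semi-convergence of Landweber-type regularization methods (cf.\ \cite[Thm.~11.5]{EHN96}, \cite[Thm.~2.6]{KNS08}): distinguish according to whether the sequence of stopping indices $(k_*^j)$ is bounded in $\N$ or tends to infinity along a subsequence. Both cases rest on combining the stability of each iterate at a fixed index (Theorem~\ref{th:lw.stab}), the error monotonicity (Proposition~\ref{pr:monot}) and the weak convergence of the exact-data iteration (Theorem~\ref{th:lw.lim}).

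In the \emph{bounded case}, extract a subsequence (not relabelled) with $k_*^{j} = K$ constant. By Theorem~\ref{th:lw.stab} the noisy iterate $x_K^{\delta^{j}}$ depends continuously on $(y_i^{\delta^j}, \delta_i^{j})_i$ and hence converges strongly to the exact-data iterate $x_K^0$ as $j\to\infty$. The discrepancy bound from Remark~\ref{rem:stop.index}\,(ii), $\norm{F_i(x_K^{\delta^{j}}) - y_i^{\delta^{j}}} \le \tau \delta_i^{j}$, then passes to the limit to give $F_i(x_K^0) = y_i$ for each $i$, and $x_K^0 \in B_{\rho/2}(x^\star)\subset B_\rho(x_0)$ by Theorem~\ref{th:plw-noise}; thus $\bar x := x_K^0$ is the desired solution.

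In the \emph{unbounded case}, pass to a subsequence with $k_*^{j} \to \infty$. Since $x_{k_*^{j}}^{\delta^{j}} \in B_{\rho/2}(x^\star)$ by Theorem~\ref{th:plw-noise}, a further subsequence converges weakly to some $\tilde x \in B_\rho(x_0)$. From $\norm{F_i(x_{k_*^{j}}^{\delta^{j}}) - y_i} \le (\tau+1)\delta_i^{j} \to 0$ and the weak$\times$strong sequential closedness of the graph of $F_i$ (Proposition~\ref{pr:tcc-cl-gr}), one concludes $F_i(\tilde x) = y_i$ for every $i$, so $\tilde x$ solves \eqref{eq:inv-probl}. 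To identify $\tilde x$ with the weak limit $\bar x$ of the exact iterates supplied by Theorem~\ref{th:lw.lim}, apply Proposition~\ref{pr:monot} with the reference solution $\tilde x$ to obtain $\norm{\tilde x - x_{k_*^{j}}^{\delta^{j}}} \le \norm{\tilde x - x_K^{\delta^{j}}}$ for any fixed $K \le k_*^{j}$; stability then yields $\limsup_{j} \norm{\tilde x - x_{k_*^{j}}^{\delta^{j}}} \le \norm{\tilde x - x_K}$. Combining this bound with the Opial-type comparison used in the proof of Theorem~\ref{th:lw.lim} (applied to the exact sequence $(x_k)$ against the two candidate limits $\tilde x$ and $\bar x$) forces $\tilde x = \bar x$.

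The main obstacle is precisely this uniqueness-of-weak-limit step in the unbounded case: norm-monotonicity produces only one-sided bounds, and the exact-data iterates are known to converge merely weakly, so the textbook strong-convergence shortcut of KNS does not apply verbatim. The resolution is an Opial-type double-sequence argument, which needs $\norm{z - x_k^\delta}$ to converge as $k$ grows for each candidate limit $z$; this information is supplied jointly by Proposition~\ref{pr:monot} (monotone non-increase) and Theorem~\ref{th:lw.stab} (to transfer norm-limits across the passage $\delta^j \to 0$). Once the weak limit is pinned down to $\bar x$, weak convergence of the whole sequence $(x_{k_*^{j}}^{\delta^{j}})$ follows from the standard subsequence principle in a bounded set of the Hilbert space $X$.
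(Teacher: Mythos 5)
The paper offers no written proof of this theorem (it is explicitly omitted with a pointer to the classical technique of \cite[Theor.~11.5]{EHN96} and \cite[Theor.~2.6]{KNS08}), so your proposal must stand on its own. Its skeleton is the right one: the split into bounded and unbounded stopping indices, the bounded case via Theorem~\ref{th:lw.stab} and Remark~\ref{rem:stop.index}(ii), and the verification that weak cluster points of $(x_{k_*^j}^{\delta^j})$ solve \eqref{eq:inv-probl} via the bound $\norm{F_i(x_{k_*^j}^{\delta^j})-y_i}\le(\tau+1)\delta_i^j$ and Proposition~\ref{pr:tcc-cl-gr} are all correct.

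The gap is exactly at the step you yourself flag as the main obstacle: the identification $\tilde x=\bar x$ in the unbounded case is asserted (``forces $\tilde x=\bar x$'') but not achieved by the tools you invoke. Monotonicity plus stability give only the one-sided bound $\limsup_j\norm{z-x_{k_*^j}^{\delta^j}}\le h(z):=\lim_k\norm{z-x_k}$ for every solution $z\in B_\rho(x_0)$, while the Opial identity $h(z)^2=h(\bar x)^2+\norm{z-\bar x}^2$ says only that $\bar x$ is the unique minimizer of $h$ over the solution set. Nothing forces a weak cluster point $\tilde x$ of the \emph{stopped noisy} iterates to minimize $h$: taking $z=\bar x$ and using weak lower semicontinuity along the subsequence converging to $\tilde x$ yields merely $\norm{\bar x-\tilde x}\le h(\bar x)$, and taking $z=\tilde x$ yields nothing; all available inequalities point the same way and no contradiction follows from $\tilde x\neq\bar x$ unless $h(\bar x)=0$. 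But $h(\bar x)=0$ is precisely strong convergence of the exact iteration, which is what the classical proofs you cite actually use ($\limsup_j\norm{\bar x-x_{k_*^j}^{\delta^j}}\le\norm{\bar x-x_K}\to 0$) and what Theorem~\ref{th:lw.lim} does \emph{not} supply --- the paper obtains strong convergence only for the truncated-stepsize variant of the Appendix (Theorem~\ref{th:lw.lim.s}). So either you work with that variant, where your argument closes immediately (and gives strong semi-convergence), or you need a genuinely new idea to pin down the weak limit; as written the step is unsupported. A minor additional point: you should also reconcile the limits produced by the two cases when $(k_*^j)$ has both bounded and unbounded subsequences; this is free, since in the bounded case $F_i(x_K^0)=y_i$ for all $i$ makes the exact iteration stationary from $K$ onward, whence $x_K^0=\bar x$.
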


\section{The randomized PLWK method}
\label{sec:plwkr}

{\color{black}
In the spirit of~\cite{BB97}, we consider a ``randomized'' version of the PLWK method
}
where in the $q$-th cycle $k=(q-1)N,(q-1)N+1,\dots,qN-1$,
\begin{align*}
  [(q-1)N], \ [(q-1)N+1], \ \dots, \ [qN-1]
\end{align*}
is a random permutation of $0,\dots,N-1$. 
In our numerical tests, the randomized version of the PLW method performed
slightly better than the deterministic version.

All convergence results stated for the ``deterministic'' PLWK method extend
trivially for the ``randomized version'' (here called PLWKr), provided the
same sequence of random permutations is considered in Theorems~\ref{th:lw.stab}
and \ref{th:lw.scE}.

\section{Numerical experiments} \label{sec:numerics}

In this section the PLWK method is implemented for solving an exponentially ill-posed
inverse problem related to the Dirichlet to Neumann map and its performance is
compared against the benchmark methods LWK (Landweber-Kaczmarz \cite{HLS07,HKLS07})
and LWKls (Landweber-Kaczmarz with line search \cite{CHLS08}).

\subsection{The inverse doping problem}

We briefly describe the inverse doping problem considered in \cite{Le06, LMZ06a, LS15}
with the same setup used in \cite[Section~5.3]{LS15}.
This problem consists in determining the doping profile function from measurements
of the linearized Voltage-Current map.

After several simplifications, the problem becomes to identify the parameter
function $\gamma$ in the PDE model
\begin{equation} \label{eq:pde-d2n}
-{\rm div}\, (\gamma \nabla \hat u) \ = \ 0 \,\ {\rm in} \ \Omega \quad\quad
\hat u \ = \ U(x) \,\ {\rm on} \ \partial\Omega 
\end{equation}
from measurements of the Dirichlet-to-Neumann map
%
$$
  \Lambda_\gamma : \begin{array}[t]{rcl}
    H^{1/2}(\partial\Omega) & \to & H^{-1/2}(\partial\Omega) \, , \\
    U & \mapsto & \big( \gamma^\star \hat{u}_\nu \big) |_{\partial\Omega}
  \end{array}
$$
where $\gamma^\star$ is the exact coefficient to be determined.
Only a \emph{finite} number $N$ of 
measurements is available, i.e., one knows 
$$
\big\{ (U_i, \Lambda_{\gamma^\star}(U_i)) \big\}_{i=0}^{N-1}
\ \in \ \big[ H^{1/2}(\partial\Omega) \times H^{-1/2}(\partial\Omega) \big]^N .
$$
Moreover, $\gamma^\star$ is assumed to be known at $\partial\Omega$, the boundary of the
domain $\Omega \subset \R^2$ representing the semi-conductor device \cite{BELM04}.


In \cite[Section~5.3]{LS15} this inverse problem was addressed for $N = 1$ (i.e.,
parameter identification from a single experiment). Here the more general setting
$N \geq 1$ is considered, which can be written within the abstract framework of
\eqref{eq:inv-probl} with
\begin{equation} \label{eq:ip-d2n}
 F_i(\gamma) \ = \ \Lambda_\gamma(U_i),\quad  \ y_i=\Lambda_{\gamma^\star}(U_i) \, , \quad
 i = 0, \dots, N-1 \, ,
\end{equation}
where $U_i \in H^{1/2}(\partial\Omega)$ are fixed Dirichlet boundary conditions
(representing the voltage profiles for the experiments), 
$Y := H^{1/2}(\partial\Omega)$
and $X := L^2(\Omega) \supset D_i := \{ \gamma \in L^\infty(\Omega)$;
$0 < \gamma_m \le \gamma(x) \le \gamma_M$, a.e. in $\Omega \}$.

The operators $F_i: H^1(\Omega) \ni \gamma \mapsto\Lambda_\gamma(U_i) \in H^{-1/2}(\partial\Omega)$
in \eqref{eq:ip-d2n} are continuous maps \cite{BELM04}.
Up to now, it is not known whether the $F_i$'s satisfy the TCC \eqref{eq:tcc}.
However, in \cite{LR08} it was established that the discretization of each $F_i$
in \eqref{eq:ip-d2n}, using the finite element method, does satisfy the TCC.
Furthermore, for each fixed $U = U_i$ in \eqref{eq:pde-d2n}, the map
$H^1(\Omega) \ni \gamma \mapsto \hat{u} \in H^1(\Omega)$ satisfies the
TCC with respect to the $H^1(\Omega)$ norm \cite{KNS08}.
Due to these considerations, the analytical convergence results of
Sections~\ref{sec:plwk} and~\ref{sec:conv-anal} do apply to finite-element
discretizations of \eqref{eq:ip-d2n} in this particular setting.
Moreover, $H^1(\Omega)$ is a natural choice of parameter space for the
PLW and PLWK methods.

\subsection{Setup of the numerical experiments}

The setup of the numerical experiments presented in this section is as follows:

\smallskip\noindent $\bullet$ The domain $\Omega \subset \mathbb R^2$ for the elliptic
PDE model \eqref{eq:pde-d2n} is the unit square $(0,1) \times (0,1)$ and the parameter
space for the above described inverse problem is $H^1(\Omega)$.

\smallskip\noindent $\bullet$ The ``exact solution'' $\gamma^\star \in D_i \subset H^1(\Omega)$
of system \eqref{eq:ip-d2n} is shown in Figure~\ref{fig:calderon-setup} (Top).

\smallskip\noindent $\bullet$ The number of available experiments is $N = 12$ and the
Dirichlet boundary conditions used in \eqref{eq:pde-d2n} are the continuous functions
$U_i: \partial\Omega \to \R$, $i = 0, \dots, N-1$, defined by
\begin{align}
  U_{2i}=\sin(s(t)(i+1)\pi/2),\quad
 U_{2i+1}=\cos(s(t)(i+1)\pi/2)
\end{align}
where $s(t)$ is the length of the counterclockwise oriented arc along $\partial \Omega$,
connecting $(0,0)$ to $t$, that is
\begin{align*}
  s(t)=
  \begin{cases}
    x, & t=(x,0),\; 0\leq x<1\\
    1+y,& t=(1,y),\;0\leq y<1\\
    3-x,& t=(x,1),\; 0<x\leq 1\\
    4-y,& t=(0,y),\; 0<y\leq 1
  \end{cases}
\end{align*}
In Figure~\ref{fig:calderon-setup} (Center) two distinct voltage profiles $U_i(x)$ are
plotted, together with the corresponding solutions of (\ref{eq:pde-d2n}).

\smallskip\noindent $\bullet$ The TCC constant $\eta$ in \eqref{eq:tcc} is not known for
this particular setup. In our computations we used the value $\eta = 0.45$, which
is in agreement with assumption \textbf{A2} as well as with \cite[Eq.\,(1.5)]{HNS95}.

\smallskip\noindent $\bullet$ The ``exact data'' $y_i$ in \eqref{eq:ip-d2n} is
obtained by solving the direct problem \eqref{eq:pde-d2n} (with $\gamma = \gamma^\star$
and $U = U_i$) using a finite element type method and adaptive mesh refinement
(mesh with approx 131.000 elements).
%
%
In order to avoid inverse crimes, a coarser uniform mesh (with ca.~33.000 elements)
was used in the implementation of the finite element method, employed for
solving the PDE's related to the iterative methods tested.

\smallskip\noindent $\bullet$ The choice of the initial guess $\gamma_0$
is a critical issue. According to assumptions \textbf{A1} --
\textbf{A3}, $\gamma_0$ has to be sufficiently close to $\gamma^\star$,
otherwise the convergence analysis developed previously does not apply.
As explained in \cite[Remark~5.1]{LS15} we choose $\gamma_0$ as the
solution the Dirichlet boundary value problem \ $\Delta \gamma_0 = 0$ in
$\Omega$, \ $\gamma_0 = \gamma^\star$ at $\partial\Omega$.

\smallskip\noindent $\bullet$ In the numerical experiment with noisy data, artificially
generated (random) noise of 2\% was added to the exact data $y_i$ in order to generate
the noisy data $y_i^\delta$. For the verification of the stopping rule \eqref{def:discrep}
we assumed exact knowledge of the noise level and chose $\tau = 3$ in \eqref{def:tau-p-lbd},
which is in agreement with the above choice for $\eta$.

\smallskip\noindent $\bullet$ 
The computation of the
adjoints $F'_{i,\delta}(\gamma)^*$, for $i=0,\dots,N-1$,
is done using the $H^1$-inner product, as developed in \cite[Remark~5.2]{LS15}.

\subsection{Experiments for exact data and noisy data}

In our numerical experiments, we implement four different Landweber-Kaczmarz
type  methods for solving the ill-posed system \eqref{eq:ip-d2n}, namely,
\begin{description}
\item[LWK] Landweber-Kaczmarz method \cite{HLS07,HKLS07};
\item[LWKls] Landweber-Kaczmarz method with line-search \cite{CHLS08};
\item[PLWK] Projective Landweber-Kaczmarz method, as developed in
  Section~\ref{sec:plwk};
\item[PLWKr] randomized Projective Landweber-Kaczmarz method, as developed in
  Section~\ref{sec:plwkr}; 
\end{description}

In order to compare the performance of these methods, the iteration error as well
as the residual are computed at the end of each cycle, i.e., our plots describe
the quantities
$$
\norm{\gamma_{kN} - \gamma^\star}_{H^1(\Omega)}
\quad\quad {\rm and } \quad\quad
\summ_{i=0}^{N-1} \norm{F_i(\gamma_{kN}) - y_i}_{L^2(\partial\Omega)} \, ,
\quad k = 0, 1, 2, \dots
$$
(here $k$ is an index for cycles).

For solving the elliptic PDE's, needed for the implementation of the iterative
methods, we used the package PLTMG \cite{Ban94} compiled with GFORTRAN-4.8 in
a INTEL(R) Xeon(R) CPU E5-1650 v3.
\medskip

Evolution of iteration error and evolution of residual in the {\em exact data case}
are shown in Figure~\ref{fig:calderon-exact}.
The PLWK method (GREEN) is compared with the LWK method (BLUE), with the
LWK method using line-search (LWKls, RED) and with the randomized PLWK method
(PLWKr, LIGHT-BLUE).
\medskip

Evolution of iteration error and evolution of residual in the {\em noisy data case}
are shown in Figure~\ref{fig:calderon-noisy}.
The PLWK method (GREEN) is compared with the LWK method (BLUE), with the
LWK method using line-search (LWKls, RED) and with the randomized PLWK method
(PLWKr, LIGHT-BLUE).
The stop criteria \eqref{def:discrep} is reached after 29 steps for the PLWK iteration,
42 steps for the LWKls iteration, 22 steps for the PLWKr iteration, and 74 steps for
the LWK iteration.
\medskip

\begin{figure}[t!]
\centerline{ \includegraphics[width=10.0cm]{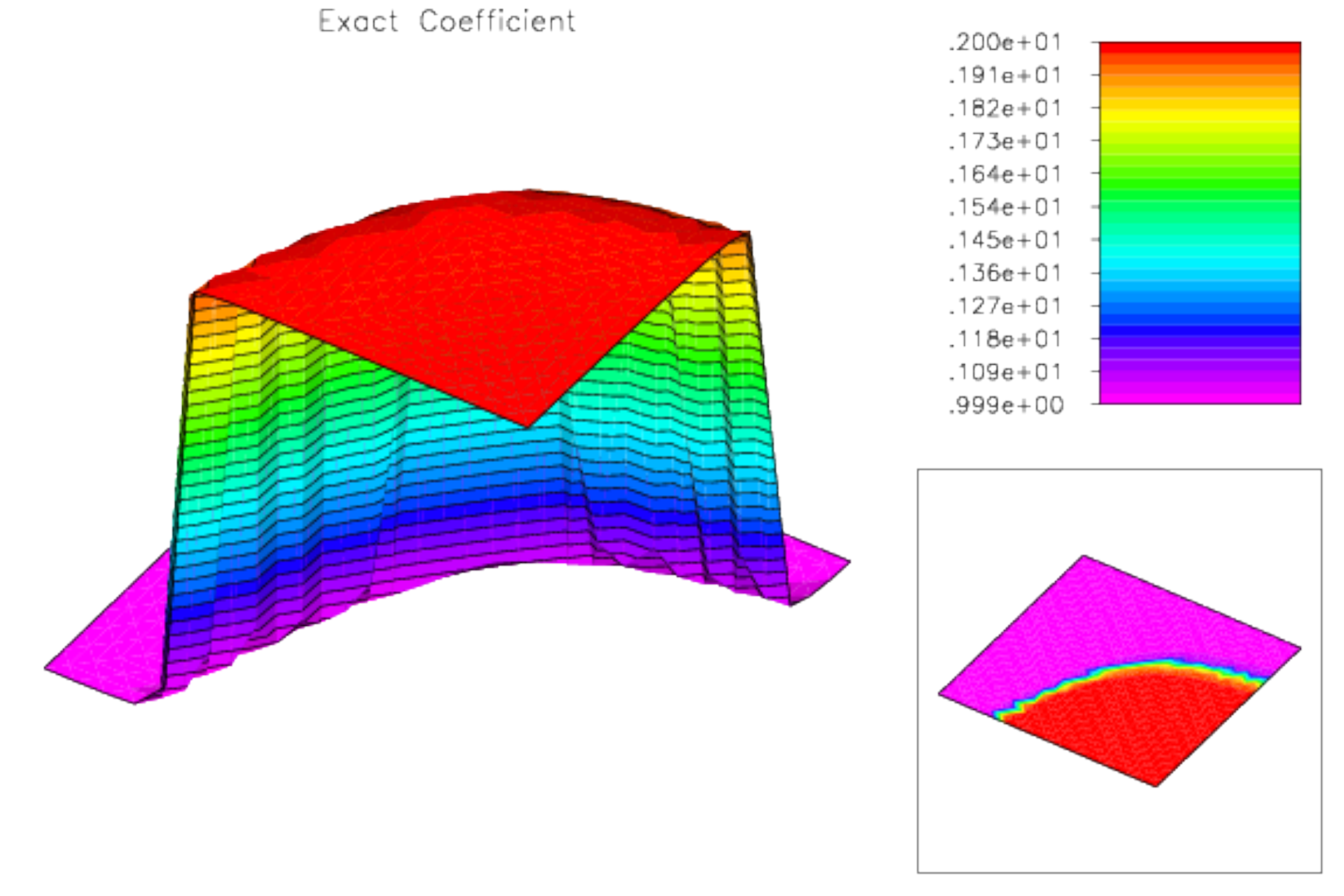} }
\bigskip\bigskip\bigskip
\centerline{ \includegraphics[width= 8.0cm]{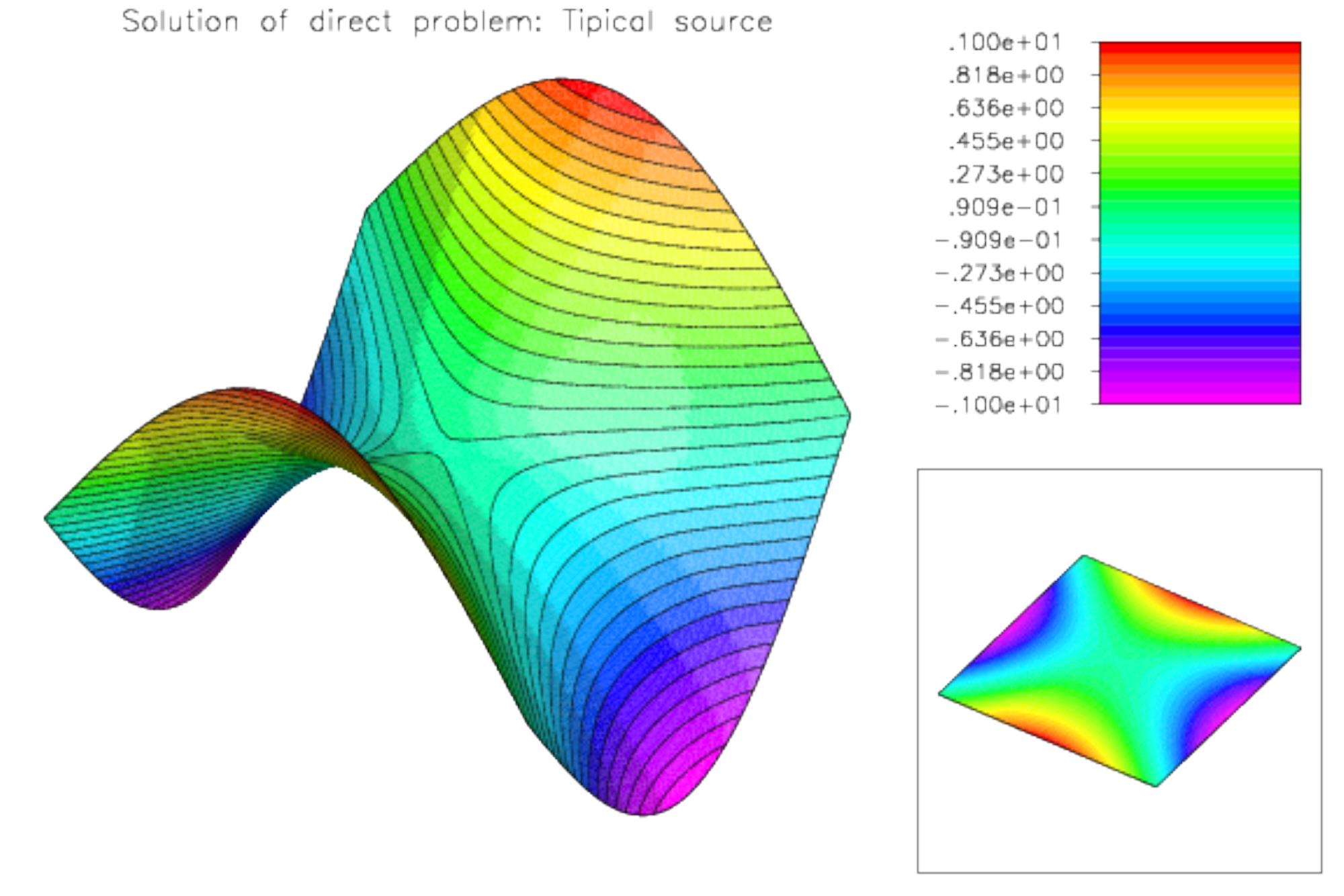}
             \includegraphics[width= 8.0cm]{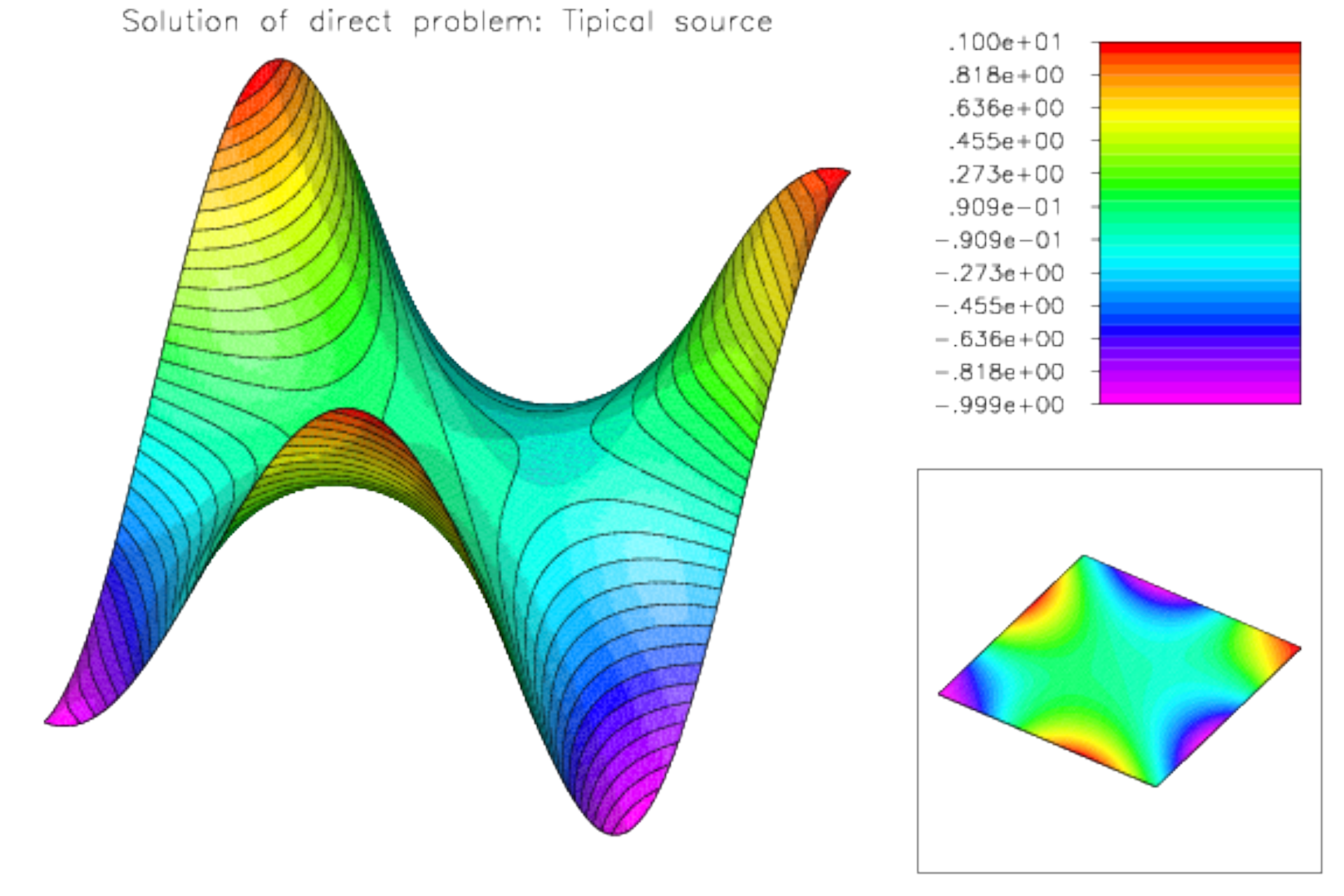} }
\bigskip\bigskip\bigskip
\centerline{ \includegraphics[width=10.0cm]{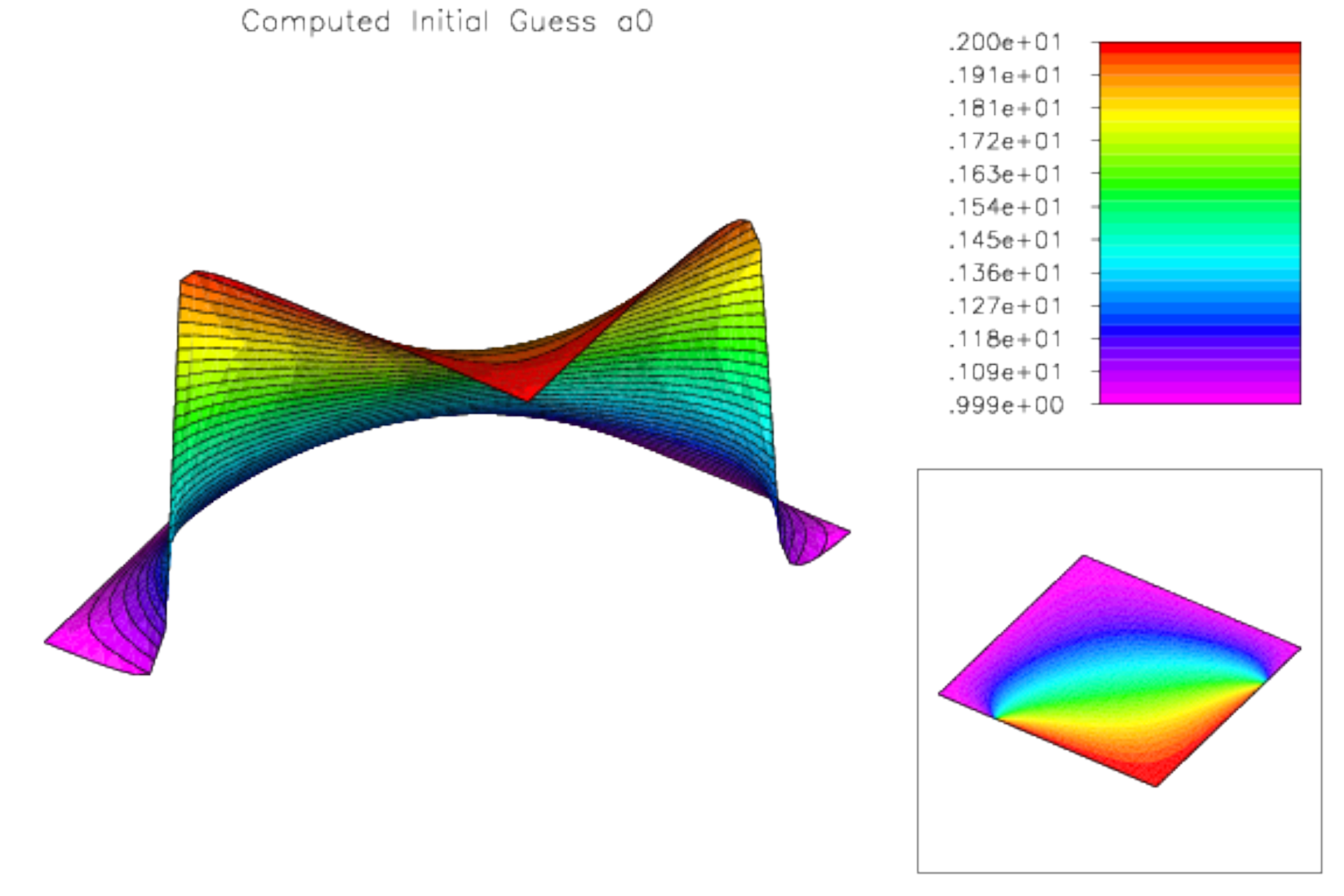} }
\caption{Setup of the inverse doping problem.
{\bf Top:} Parameter function $\gamma^*$ to be identified;
{\bf Center:} Functions $U_1$ and $U_6$ (the Dirichlet boundary conditions at 
$\partial\Omega$ for \eqref{eq:pde-d2n}) and the solutions $\hat u_2$, $\hat u_6$
of the corresponding PDE's;
{\bf Bottom:} Initial guess $\gamma_0$ for the iterative methods PLWK, LWK and LWKls.}
\label{fig:calderon-setup}
\end{figure}

\begin{figure}[t!]
\centerline{ \includegraphics[width=14cm]{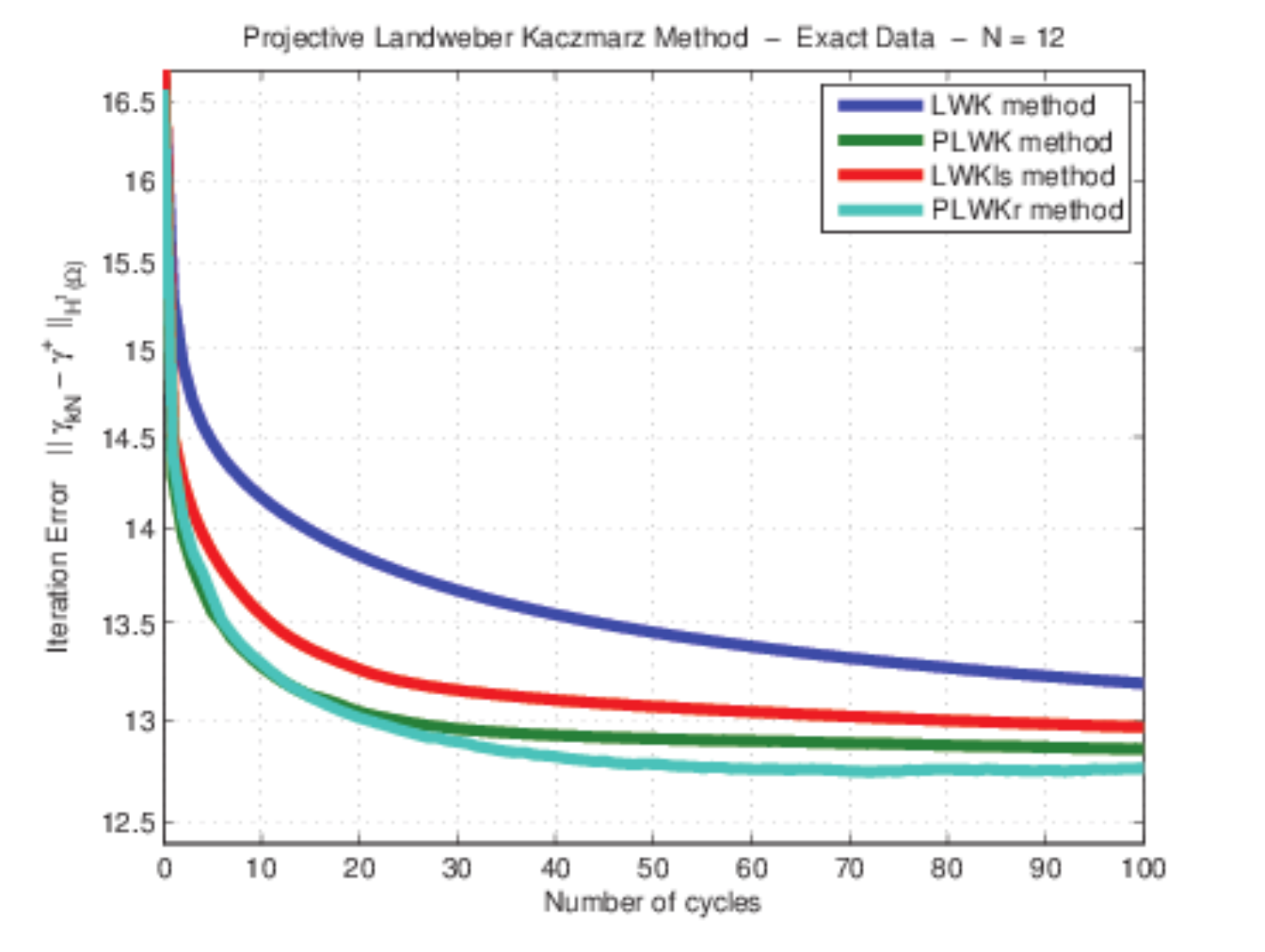} }
\centerline{ \includegraphics[width=14cm]{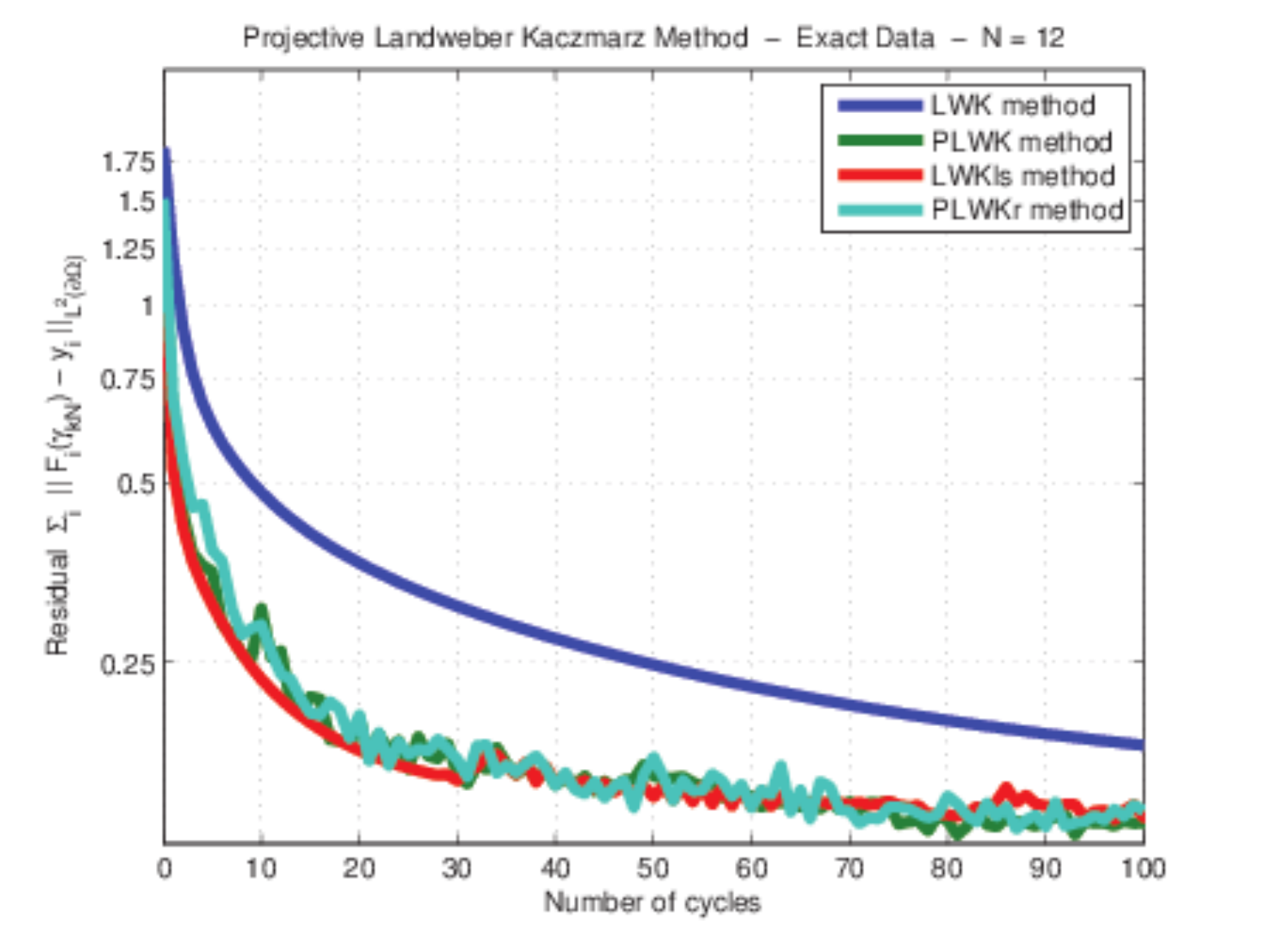} }
\caption{Experiment with exact data.
The PLW method (GREEN) is compared with the LW method (BLUE) and with the LWls method (RED).
{\bf Top:} Evolution of the iteration error $\norm{\gamma_{kN} - \gamma^\star}_{H^1(\Omega)}$;
{\bf Bottom:} Evolution of the residual
$\summ\nolimits_{i=0}^{N-1} \norm{F_i(\gamma_{kN}) - y_i}_{L^2(\partial\Omega)}$.}
\label{fig:calderon-exact}
\end{figure}

\begin{figure}[t!]
\centerline{ \includegraphics[width=10cm]{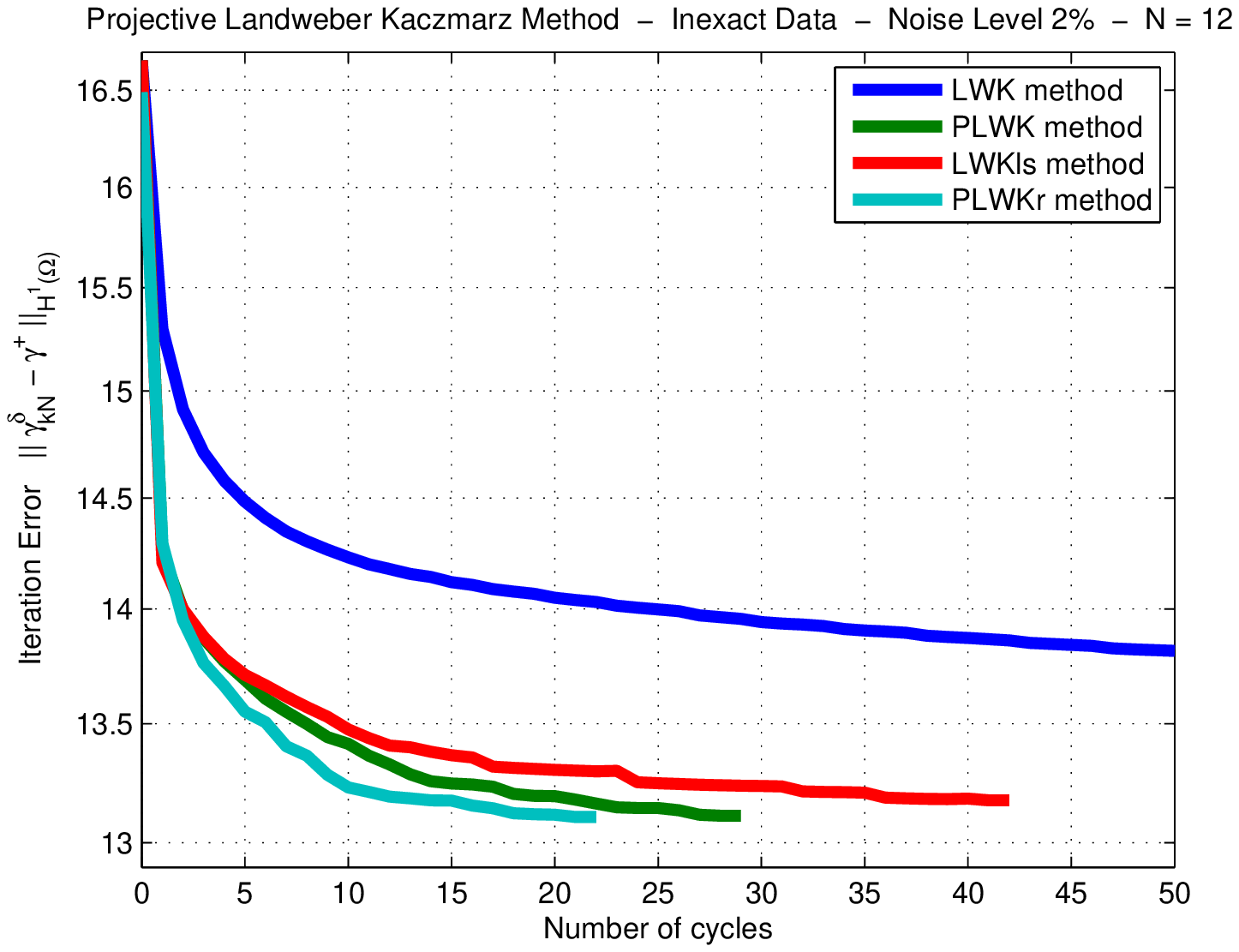} }
\centerline{ \includegraphics[width=10cm]{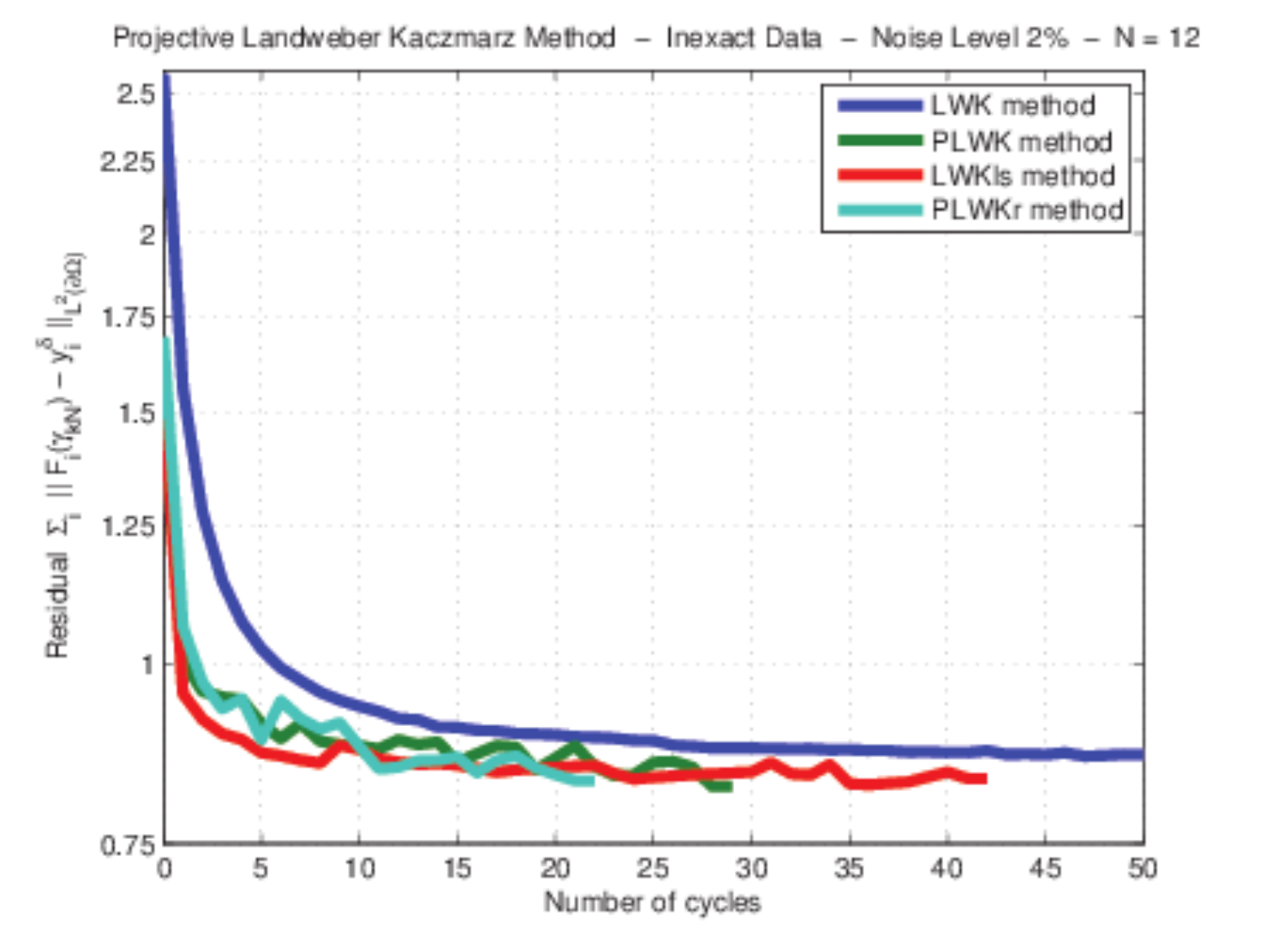} }
\centerline{ \includegraphics[width=9cm]{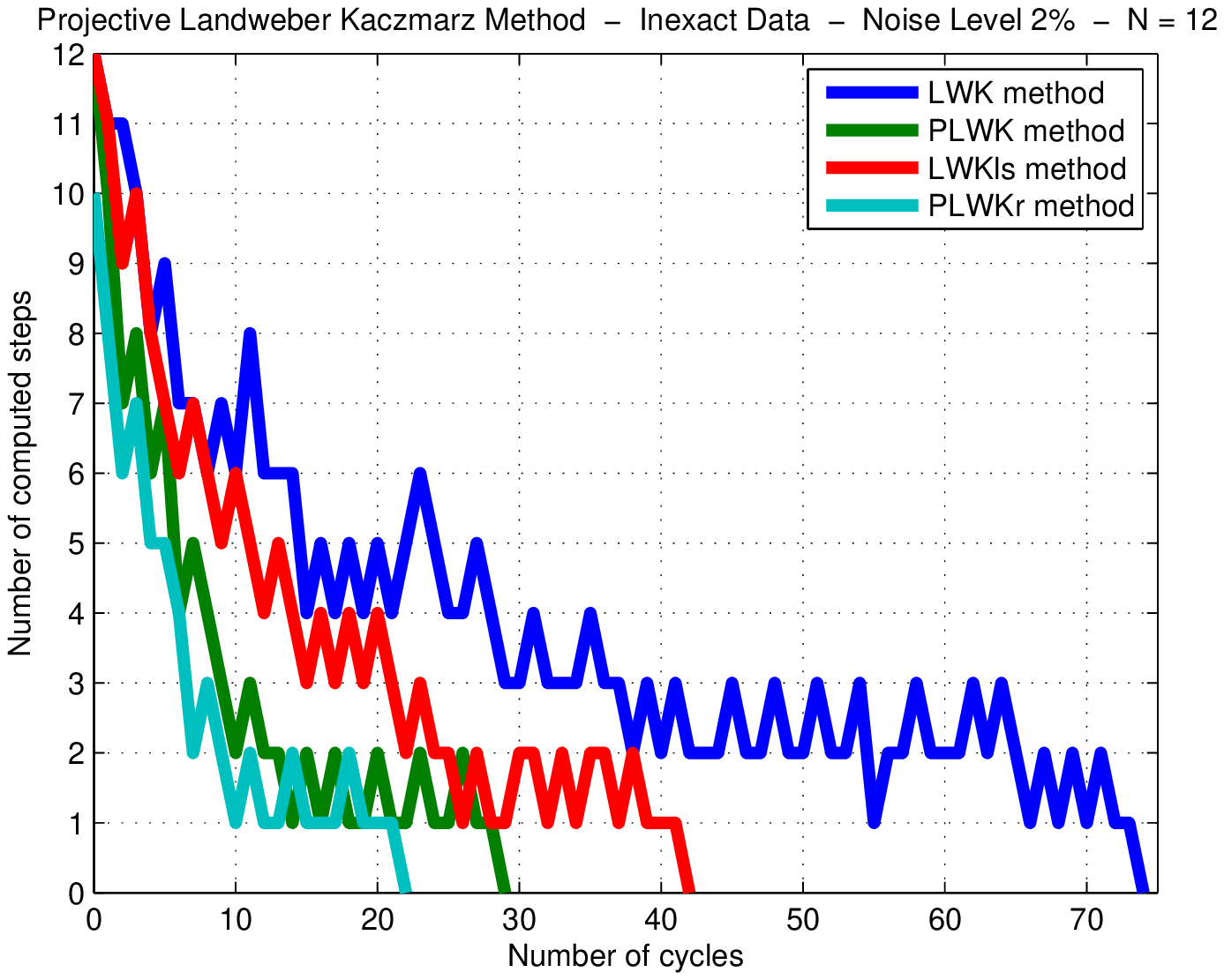} }
\caption{Experiment with noisy data.
The PLW method (GREEN) is compared with the LW method (BLUE), the LWls method (RED) and
the PLW-random method (LIGHT-BLUE).
{\bf Top:} Evolution of the iteration error;
{\bf Center:} Evolution of the residual
{\bf Bottom:} Number of computed iterative steps per cycle.}
\label{fig:calderon-noisy}
\end{figure}

Altogether, the PLWK and PLWKr outperformed the other methods in our
preliminary numerical experiments. It is worth mentioning that the
LWKls, due to the line search, demands in each iteration the solution of
three PDE's, while the other methods require the solution of two PDE's
per iteration. In the noisy data case, very soon many residuals drop
bellow the threshold in each cycle, and, in the corresponding iterations,
only one PDE has to be solved (see Figure~\ref{fig:calderon-noisy}).

\section{Final remarks and conclusions} \label{sec:conclusion}

In this article we combine the {\em projective Landweber method} \cite{LS15}
with {\em Kaczmarz's method} \cite{Kac37} for solving systems of non-linear
ill-posed equations.  

The underlying assumption used in convergence analysis presented in this manuscript
is the tangential cone condition \eqref{eq:tcc}.
Notice that the convergence analysis of the PLWK method requires $\eta < 1$ while the
LWK method requires the TCC with $\eta < 0.5$ \cite{HLS07}.

The numerical experiments depicted in Figure~\ref{fig:calderon-noisy} indicate
that, in the noisy data case, the bang-bang relaxation parameter $\omega_k$ in
\eqref{def:wk-def} vanishes for several $k$ (already after the first iterations;
see Figure~\ref{fig:calderon-noisy} Bottom).
Consequently, the computational evaluation of the adjoint $F'_{[k]}(x_k^\delta)^*$
is avoided, making the PLWK and PLWKr methods a fast alternative to conventional
regularization techniques for solving \eqref{eq:single-op} (single equation approach).

The truncation technique used in the Appendix is analogous to the one proposed in
\cite{CHLS08} to prove a similar result for a steepest-descent type method.
The role played by this truncation is merely to provide a sufficient
condition for proving strong convergence of the PLWK method. In the
realistic noisy data case, this truncation does not modify the original
PLWK method introduced in Section~\ref{sec:plwk}, whenever the constant
$\lambda_{max}$ is chosen large enough.

The PLWK and PLWKr methods have proven to be efficient alternatives to
the LWK and LWKls methods for solving ill-posed systems. Comparison with
Newton type methods will be the subject of future work.

\begin{appendices}
\section{\!\!\!\!\!\!ppendix: Strong convergence for exact data} \label{sec:appendix}

In what follows we consider the PLWK iteration in \eqref{eq:plwk-iter} with $\omega_k$
defined as in \eqref{def:wk-def}, and $\tau$, $p_i$ defined as in \eqref{def:tau-p-lbd}.
However, differently from \eqref{def:tau-p-lbd}, $\lambda_k$ is now defined by
\begin{equation}
   \label{def:trunc-lbd}
   \lambda_k := \Lambda\Big( \dfrac{ p_{[k]}( \norm{F_{[k],\delta}(x_k^\delta)} )}
                   { \norm{F_{[k]}'(x_k^\delta)^* F_{[k],\delta}(x_k^\delta)}^2 } \Big) , \
   {\rm if } \ F_{[k]}'(x_k^\delta)^* F_{[k],\delta}(x_k^\delta) \neq 0 , \quad\quad
   \lambda_k := 0 , \ {\rm otherwise.}
\end{equation}
Here $\Lambda: \R^+ \to \R$ is a truncation function satisfying $\Lambda(t) =
\min\{ t, \lambda_{max} \}$ for $t \geq 0$, where $\lambda_{max} > (1-\eta) C^{-2}$
is some positive constant.

In the exact data case we have
$$
p_i(t) \ := \ (1- \eta) \, t^2 , \ \ i \in\set{0,\dots,N-1}
\quad\quad {\rm and } \quad\quad
\omega_k \ := \ \begin{cases}
                 1  & F_{[k],0}(x_k) \neq 0 \\
                 0  & \text{otherwise}
              \end{cases} , \ \ k \in \N .
$$
Moreover, we have either $\lambda_k = 0$ (whenever $F_{[k],0}(x_k) = 0$) or
\begin{equation}
   \label{def:lbd-min}
   \lambda_k \ := \ \min\Big\{ \dfrac{ (1-\eta) \norm{F_{[k],0}(x_k)}^2 }
                    { \norm{F_{[k]}'(x_k)^* F_{[k],0}(x_k)}^2 } \, , \
                    \lambda_{max} \Big\} \ > \ \frac{(1-\eta)}{C^2} \ =: \ \lambda_{min} .
\end{equation}
The inequality in \eqref{def:lbd-min} follows from the fact that $x_k \in B_\rho(x_0)$
for $k \geq 0$, together with assumption \textbf{A1}
(notice that both Proposition~\ref{pr:monot} and Theorem~\ref{th:plw-noise} remain
valid for PLWK with the new definition of $\lambda_k$ in \eqref{def:trunc-lbd}).

In the next theorem we use this setup to prove a strong convergence result for the
PLWK iteration in the case of exact data. The truncation function $\Lambda$ is essential
for obtaining the estimate \eqref{eq:hns-plwk1}.

\begin{theorem}[Strong convergence for exact data] \mbox{} \\
  \label{th:lw.lim.s}
  Let assumptions \textbf{A1} -- \textbf{A3} hold true, $\delta_0 = \dots = \delta_{N-1}
  = 0$ and $(x_k)$ be defined by the PLWK method in \eqref{eq:plwk-iter}, \eqref{def:wk-def}
  with $\lambda_k$ defined as in \eqref{def:trunc-lbd}.
  If \ $\inf$ $\theta_k > 0$ \ and \ $\sup$ $\theta_k < 2$, then $(x_k)$
  converges strongly to some $\bar x \in B_\rho(x_0)$  solving \eqref{eq:inv-probl}.
\end{theorem}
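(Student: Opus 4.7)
The proof plan is to adapt the classical strong-convergence argument of Hanke-Neubauer-Scherzer~\cite{HNS95} to the cyclic Kaczmarz setting, exactly in the spirit of the steepest-descent analysis of~\cite{CHLS08}. The only substantive change from Section~\ref{sec:plwk} is the truncation function $\Lambda$ in \eqref{def:trunc-lbd}: its role is to furnish the uniform upper bound $\lambda_k \leq \lambda_{max}$, which, combined with the lower bound $\lambda_k \geq \lambda_{min} = (1-\eta)/C^2$ from \eqref{def:lbd-min} (whenever $\lambda_k \neq 0$), is the quantitative control that makes the Cauchy-sequence estimate go through.

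First I would collect the preparatory facts. Proposition~\ref{pr:monot} and Theorem~\ref{th:plw-noise} remain valid for the truncated $\lambda_k$, because Lemma~\ref{lm:separat} only requires that the iteration execute a relaxed projection onto $H_{[k],x_k}$ with $\lambda_k \geq 0$. Hence $(x_k) \subset B_{\rho/2}(x^\star) \subset B_\rho(x_0)$ and $\sum_{k} \lambda_k \norm{F_{[k],0}(x_k)}^2 < \infty$; dividing by $\lambda_{min}$ gives $\sum_{k} \norm{F_{[k],0}(x_k)}^2 < \infty$, which is exactly the ingredient needed for the proof of Theorem~\ref{th:lw.lim} to carry over verbatim. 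Consequently there exists $\bar x \in B_\rho(x_0)$ with $x_k \rightharpoonup \bar x$ and $F_i(\bar x) = y_i$ for all $i$. Applying Proposition~\ref{pr:monot} with $\bar x$ in the role of $x^\star$, the sequence $\norm{x_k - \bar x}$ is non-increasing and therefore converges to some $\alpha \geq 0$; strong convergence of $(x_k)$ to $\bar x$ is thus equivalent to $(x_k)$ being Cauchy.

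For the Cauchy estimate I would use the Hilbert-space identity
\begin{equation*}
\norm{x_m - x_k}^2 \ = \ \norm{x_k - \bar x}^2 - \norm{x_m - \bar x}^2 + 2\,\Inner{x_m - x_k}{x_m - \bar x}
\end{equation*}
together with its analogue for $\norm{x_l - x_m}^2$. For $k \leq l$, pick an intermediate index $m \in \set{k, \dots, l}$ chosen in a Kaczmarz-aware way (as in \cite{HLS07, CHLS08}) to minimise residuals per equation within the range. The two squared-norm contributions vanish as $k \to \infty$ because $\norm{x_j - \bar x}^2 \to \alpha^2$ for every $j \geq k$. Expanding the inner product via \eqref{eq:plwk-iter} and applying the TCC once to the pair $(x_j, x_m)$ and once to $(x_j, \bar x)$ yields
\begin{equation*}
\norm{F'_{[j]}(x_j)(x_m - \bar x)} \leq (1+\eta)\bigl(\norm{F_{[j],0}(x_m)} + 2\norm{F_{[j],0}(x_j)}\bigr),
\end{equation*}
and then, using $\theta_j < 2$, $\omega_j \leq 1$ and the truncation $\lambda_j \leq \lambda_{max}$, the whole inner product $|\Inner{x_m - x_k}{x_m - \bar x}|$ is dominated by a constant multiple of $\sum_{j=k}^{m-1} \norm{F_{[j],0}(x_j)}^2 + \sum_{j=k}^{m-1} \norm{F_{[j],0}(x_j)}\,\norm{F_{[j],0}(x_m)}$.

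The first sum is a tail of a convergent series and vanishes as $k \to \infty$. The cross sum is the hard part, and this is where I expect the main obstacle: the factor $\norm{F_{[j],0}(x_m)}$ is the residual of equation $[j]$ evaluated at a \emph{different} iterate $x_m$, which does not match anything in our summable series. To handle it I would exploit the cyclic structure: for every $j$ there is an index $m' \in \set{m-N+1, \dots, m+N-1}$ with $[m'] = [j]$, and Lipschitz continuity from \textbf{A1} gives $\norm{F_{[j],0}(x_m)} \leq \norm{F_{[m'],0}(x_{m'})} + C\,\norm{x_m - x_{m'}}$; the cycle-wise minimising choice of $m$ controls the first term by $\norm{F_{[j],0}(x_j)}$ (up to constants), while the second is absorbed by $\norm{x_{j+1}-x_j} \to 0$ over a window of at most $2N$ steps from \eqref{eq:plw-noise2}. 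Without the uniform upper bound $\lambda_k \leq \lambda_{max}$ enforced by \eqref{def:trunc-lbd}, the coefficient $\theta_j \omega_j \lambda_j$ in front of each term of the expanded inner product could not be absorbed into a universal constant, and the whole Cauchy estimate would break down — which is exactly why the modification of the stepsize is indispensable for upgrading weak convergence to strong convergence.
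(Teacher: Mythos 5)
Your proposal follows essentially the same route as the paper's Appendix proof: the Hanke--Neubauer--Scherzer Cauchy-sequence argument with a cycle-wise residual-minimizing intermediate index, TCC-based estimates, and the truncation bound $\lambda_k \le \lambda_{max}$ together with the lower bound $\lambda_{min}=(1-\eta)C^{-2}$ doing exactly the quantitative work you describe; the only cosmetic difference is that you anchor the argument at the weak limit $\bar x$, whereas the paper works directly with $e_k = x^\star - x_k$ and never needs to invoke weak convergence first. One small caution: the cross term is not merely ``absorbed by $\norm{x_{j+1}-x_j}\to 0$'' --- it has to be converted, via the minimizing property of the chosen cycle and a Cauchy--Schwarz step, into a tail of the convergent series $\summ_i \lambda_i \norm{F_{[i],0}(x_i)}^2$, which is precisely what the paper's estimates \eqref{eq:hns-plwk1}--\eqref{eq:hns-plwk2} accomplish.
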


\begin{proof}
We define $e_k := x^\star - x_k$. Since we have exact data, it follows from
Proposition~\ref{pr:monot} that $\norm{e_k}$ is monotone non-increasing.
Thus, $\norm{e_k}$ converges to some $\epsilon \geq 0$. In the following
we show that the sequence $(e_k)$ is a Cauchy sequence.
In order to prove this fact, it suffices to show that
\begin{equation}  \label{eq:csarg}
  |\langle e_l - e_k , e_l \rangle | \to 0   \quad\quad {\rm and } \quad\quad
  |\langle e_l - e_j , e_l \rangle | \to 0
\end{equation}
as $k$, $j \to \infty$, where $k \leq j$ and $l \in \set{k, \dots, j}$ (see, e.g.,
\cite[Theorem~2.3]{HNS95} for the Landweber method or \cite[Theorem~2.3]{HLS07}
for the LWK method).

Let $k \leq j$ be arbitrary. Define \ $k_0 := \lfloor k/N \rfloor$,
$j_0 := \lfloor j/N \rfloor$ and \ $k_1 := [k]$, $j_1 := [j]$. Consequently, \
$k = k_0 N + k_1$, \ $j = j_0 N + j_1$. Now, choose $l_0 \in\set{k_0,\dots,j_0}$
such that
\begin{equation} \label{eq:l-min}
\summ_{n=0}^{N-1} \, \norm{ F_{n,0}(x_{l_0 N + n}) } \ \leq \
\summ_{n=0}^{N-1} \, \norm{ F_{n,0}(x_{i_0 N + n}) }
\end{equation}
for all $i_0 \in \set{k_0, \ldots, j_0}$, \ and set $l := l_0 N + N-1$. Therefore,
\begin{eqnarray}
\lefteqn{ \hspace{-1.0cm} |\langle e_l - e_j , e_l \rangle |
\, = \,
\Big| \summ_{i=l}^{j-1}
\langle (x_{i+1} - x_i), (x^\star - x_l) \rangle \Big|
\, = \,
\Big| \summ_{i=l}^{j-1} \theta_i \, \lambda_i \,
\langle y_{[i]} - F_{[i]}(x_i) ,
        F_{[i]}'(x_i) (x^\star - x_l) \rangle \Big| } \nonumber \\
&\leq&
\summ_{i=l}^{j-1} \theta_i \, \lambda_i \, 
\norm{ F_{[i],0}(x_i) } \,
\norm{ F_{[i]}'(x_i) (x^\star - x_i) \, + \, F_{[i]}'(x_i) (x_i - x_l) } \nonumber \\
&\leq&
2 \, \summ_{i=l}^{j-1} \lambda_i \,
\norm{ F_{[i],0}(x_i) } \, (1+\eta) \,
\Big[ \| F_{[i]}(x^\star) - F_{[i]}(x_i) \| \, + \,
      \| F_{[i]}(x_i) - F_{[i]}(x_l)     \| \Big] \nonumber \\
&=&
2 \, (1+\eta) \, \summ_{i=l}^{j-1} \lambda_i \,
\norm{ F_{[i],0}(x_i) } \,
\Big[ \| F_{[i],0}(x_i) \|
    + \| F_{[i]}(x_i) - y_{[i]} + y_{[i]} - F_{[i]}(x_l) \| \Big] \nonumber \\
&\leq&
2 \, (1+\eta) \summ_{i=l}^{j-1} \lambda_i \,
\norm{ F_{[i],0}(x_i) } \,
\Big[  2 \| F_{[i],0}(x_i) \| \, + \, \| F_{[i]}(x_l) - y_{[i]} \| \Big] \nonumber \\
&=&
4 \, (1+\eta) \summ_{i=l}^{j-1} \lambda_i \, \norm{ F_{[i],0}(x_i) }^2 \ + \
2 \, (1+\eta) \summ_{i=l}^{j-1} \lambda_i \,
\norm{ F_{[i],0}(x_i) } \, \norm{ F_{[i],0}(x_l) } \label{eq:hns-plwk}
\end{eqnarray}
(in the second inequality we used Proposition~\ref{pr:prelim}, item~\ref{it:1}).
Next we estimate the term $\norm{ F_{[i],0}(x_l) }$ on the right hand side
of \eqref{eq:hns-plwk} (to simplify the notation we write $i = i_0N+i_1$, with
$i_i \in\set{0,\dots,N-1}$).
\begin{eqnarray}
\lefteqn{ \hspace{-1.0cm}\norm{ F_{[i],0}(x_l) } \ = \ \norm{ F_{[i]}(x_l) - y_{[i]} }
\ = \ \norm{ F_{i_1}(x_{l_0N+N-1}) - y_{i_1} } \ \leq } \nonumber \\
&\leq&
\norm{ F_{i_1}(x_{l_0N+i_1}) - y_{i_1} } \ + \
\summ_{n=i_1}^{N-2} \norm{ F_{i_1}(x_{l_0N+n+1}) - F_{i_1}(x_{l_0N+n}) } \nonumber \\
&\leq&
\norm{ F_{i_1,0}(x_{l_0N+i_1}) } \ + \ \frac{1}{(1-\eta)} \,
\summ_{n=i_1}^{N-2} \norm{ F'_{i_1}(x_{l_0N+n}) ( x_{l_0N+n+1} - x_{l_0N+n}) } \nonumber \\
&\leq&
\norm{ F_{i_1,0}(x_{l_0N+i_1}) } \ + \ \frac{C}{(1-\eta)} \,
\summ_{n=i_1}^{N-2} \norm{ x_{l_0N+n+1} - x_{l_0N+n} } \nonumber \\
&\leq&
\norm{ F_{i_1,0}(x_{l_0N+i_1}) } \ + \ \frac{C}{(1-\eta)} \,
\summ_{n=i_1}^{N-2} \theta_{l_0N+n} \, \lambda_{l_0N+n} \,
\norm{  F_{n}'(x_{l_0N+n})^* F_{n,0}(x_{l_0N+n}) } \nonumber \\
&\leq&
\norm{ F_{i_1,0}(x_{l_0N+i_1}) } \ + \ \frac{2 C^2}{(1-\eta)} \,
\summ_{n=i_1}^{N-2} \lambda_{max} \, \norm{ F_{n,0}(x_{l_0N+n}) } \nonumber \\
&\leq&
\widetilde{C} \, \summ_{n=i_1}^{N-2} \norm{ F_{n,0}(x_{l_0N+n}) }
\ \leq \
\widetilde{C} \, \summ_{n=0}^{N-1} \norm{ F_{n,0}(x_{l_0N+n}) } \label{eq:hns-plwk1}
\end{eqnarray}
(the second inequality follows from Proposition~\ref{pr:prelim}, item~\ref{it:1}).
Here $\widetilde{C} = \big[ 2(1-\eta) + 4 C^2 \lambda_{max} \big] \, (1-\eta)^{-1}$.
Using \eqref{eq:hns-plwk1} we estimate the second sum on the right hand side of
\eqref{eq:hns-plwk} (once again we adopt the notation $i = i_0N+i_1$).
\begin{eqnarray}
\lefteqn{ \hspace{-2.0cm} \summ_{i=l}^{j-1} \lambda_i \,
\norm{ F_{[i],0}(x_i) } \, \norm{ F_{[i],0}(x_l) } \ \leq \
\summ_{i_0=l_0}^{j_0} \ \summ_{i_1=0}^{N-1} 
\lambda_i \,
\norm{ F_{i_1,0}(x_i) } \, \norm{ F_{i_1,0}(x_l) } } \nonumber \\
&\leq&
\summ_{i_0=l_0}^{j_0} \, \Big[ \, \summ_{i_1=0}^{N-1} 
\, \lambda_i \, \norm{ F_{i_1,0}(x_i) } \,
\Big( \widetilde{C} \summ_{n=0}^{N-1} \, \norm{ F_{n,0}(x_{l_0N+n}) } \Big)
\Big] \nonumber \\
&\leq&
\widetilde{C} \, \lambda_{max} \,
\summ_{i_0=l_0}^{j_0} \, \Big( \summ_{i_1=0}^{N-1} 
\norm{ F_{i_1,0}(x_{i_0N+i_1}) } \Big) \,
\Big( \summ_{n=0}^{N-1} \, \norm{ F_{n,0}(x_{l_0N+n}) } \Big) \nonumber \\
&\leq&
\widetilde{C} \, \lambda_{max} \,
\summ_{i_0=l_0}^{j_0} \, \Big( \summ_{i_1=0}^{N-1} \norm{ F_{i_1,0}(x_{i_0N+i_1}) }
\Big)^2 \nonumber \\
&\leq&
\widetilde{C} \, \lambda_{max} \,
\summ_{i_0=l_0}^{j_0} N \, \summ_{i_1=0}^{N-1} \norm{ F_{i_1,0}(x_{i_0N+i_1}) }^2
\nonumber \\
& = &
\widetilde{C} \, N \, \lambda_{max}
\summ_{i=l_0}^{j_0N+N-1} \, \norm{ F_{[i],0}(x_{i}) }^2, \label{eq:hns-plwk2}
\end{eqnarray}
where the third inequality follows from \eqref{eq:l-min}. Substituting \eqref{eq:hns-plwk2}
in \eqref{eq:hns-plwk} we obtain
\begin{eqnarray*}
|\langle e_l - e_j , e_l \rangle |
& \leq &
         4 (1+\eta)  \lambda_{max} \summ_{i=l}^{j-1} \norm{ F_{[i],0}(x_i) }^2 \ + \
         2 (1+\eta) \widetilde{C} N \lambda_{max} \!\!
\summ_{i=l_0}^{j_0N+N-1} \, \norm{ F_{[i],0}(x_{i}) }^2 \\
& \leq &
         \widetilde{\!\widetilde{C}} \,
         \summ_{i=l_0}^{\infty} \, \lambda_i \, \norm{ F_{[i],0}(x_{i}) }^2 ,
\end{eqnarray*}
where $\widetilde{\!\widetilde{C}} = 2 \lambda_{max} \, (1+\eta) \, [2 + \widetilde{C} N]
\, \lambda_{min}^{-1}$ (in the last inequality we used \eqref{def:lbd-min}).

From \eqref{eq:plw-noise1} and the definition of the index $l \in\set{k,\dots,j}$
it follows that, given $\epsilon > 0$ there exists some $N_\epsilon \in \N$ such that
$|\langle e_l - e_j , e_l \rangle | \leq \epsilon/2$ for $k$, $j \geq N_\epsilon$.
Analogously, one shows that $|\langle e_l - e_k , e_l \rangle | \leq \epsilon$
for $k$, $j \geq N_\epsilon$. This is sufficient to guarantee \eqref{eq:csarg}.

Consequently, $x_k = x^\star - e_k$ converges to some $\bar x \in B_\rho(x_0)$.
Since, due to \eqref{eq:plw-noise1}, the residuals $\norm{F_{[k],0} (x_k)}$ converge
to zero as $k \to \infty$, we conclude that $\bar x$ is a solution of \eqref{eq:inv-probl},
completing the proof.
\end{proof}

%
%

\end{appendices}

\section*{Acknowledgments}

A.L. acknowledges support from the Brazilian research agencies CAPES and
CNPq (grant 309767/13-0).
The work of B.F.S. was partially supported by CNPq (grants 474996/13-1,
302962/11-5) and FAPERJ (grants E-26/102.940/2011, 201.584/2014).

\bibliographystyle{plain}
\bibliography{PLW-Kacz}

\end{document}